%%%%%%%%%%%%%%%%%%%%%%%%%%%%%%%%%%%%%%%%%%%%%%%%%%%%%%%%%%%%%%
%%                                                          %%
%%       This file is the LaTeX-2e file for the paper       %%
%%                                                          %%
%%       Proof of a dynamical Mordell-Lang conjecture       %%
%%                 for lines in the plane                   %%
%%                                                          %%
%%                           by                             %%
%%                                                          %%
%%         Dragos Ghioca, Tom Tucker and Mike Zieve         %%
%%                                                          %%
%%               Accepted for publication in                %%
%%                                                          %%
%%                Inventiones Mathematicae                  %%
%%                                                          %%
%%%%%%%%%%%%%%%%%%%%%%%%%%%%%%%%%%%%%%%%%%%%%%%%%%%%%%%%%%%%%%

\documentclass[11pt]{amsart}
\usepackage{amssymb}
\usepackage{verbatim}

%% +++++ PREAMBLE

%############################
%### Theorem Environments ###
%############################

\newtheorem{lemma}{Lemma} [section]
\newtheorem{thm}[lemma]{Theorem}
\newtheorem{conjecture}[lemma]{Conjecture}
\newtheorem{cor}[lemma]{Corollary}
\newtheorem{prop}[lemma]{Proposition}

\newtheorem{defi}[lemma]{Definition}

\theoremstyle{remark}

\newtheorem*{remark}{Remark}
\newtheorem*{notation}{Notation}

\numberwithin{equation}{section}

%######################
%### standard stuff ###
%######################

\DeclareMathOperator{\Spec}{{Spec}}

\DeclareMathOperator{\trdeg}{{tr.deg}}
\newcommand{\N}{{\mathbb N}}
\newcommand{\Q}{{\mathbb Q}}
\newcommand{\bG}{{\mathbb G}}

\newcommand{\Kbar}{{\overline K}}
\newcommand{\C}{{\mathbb C}}
\newcommand{\Z}{{\mathbb Z}}
\newcommand{\R}{{\mathbb R}}
\newcommand{\M}{{\mathcal M}}
\newcommand{\OS}{{\mathcal O}_S}
\newcommand{\OO}{{\mathcal O}}
\newcommand{\tth}{^{\operatorname{th}}}
\newcommand{\lra}{\longrightarrow}
\newcommand{\A}{{\mathbb A}}
\newcommand{\bP}{{\mathbb P}}

\newcommand{\hh}{\widehat{h}}

\newcommand{\ab}{{\alpha}}

\begin{comment}
\newif\ifpdf
\ifx\pdfoutput\undefined
\pdffalse
\else
\pdfoutput=1
\pdftrue
\fi

\ifpdf
\usepackage{times,mathptmx}
\usepackage[colorlinks,pagebackref,pdftex, bookmarks=false]{hyperref}
\else
\usepackage[colorlinks,pagebackref]{hyperref}
\fi
\end{comment}

%% +++++ END PREAMBLE

\begin{document}

%#######################################################################
%#######################################################################

% Declarations for Front Matter
\title[Dynamical Mordell-Lang conjecture]
{Intersections of polynomial orbits, and a dynamical Mordell-Lang conjecture}

\author{Dragos Ghioca}

\address{
Dragos Ghioca \\
Department of Mathematics \& Computer Science\\
University of Lethbridge \\
4401 University Drive \\ 
Lethbridge, Alberta T1K 3M4, Canada
}

\email{dragos.ghioca@uleth.ca}

\author{Thomas J. Tucker}

\address{
Thomas Tucker\\
Department of Mathematics\\
Hylan Building\\
University of Rochester\\
Rochester, NY 14627, USA
}

\email{ttucker@math.rochester.edu}

\author{Michael E. Zieve}
\address{
Michael E. Zieve\\
  Center for Communications Research,
  805 Bunn Drive,
  Princeton, NJ 08540, USA
}
\email{zieve@idaccr.org}
\urladdr{http://www.math.rutgers.edu}

\begin{abstract}
  We prove that if nonlinear complex polynomials of the same degree
  have orbits with infinite intersection, then the polynomials have a
  common iterate.  We also prove a special case of a
  conjectured dynamical analogue of the Mordell-Lang conjecture.
\end{abstract}

\date{\today} \subjclass{Primary 14G25; Secondary 37F10, 11C08}
\thanks{The authors thank Robert Benedetto for discussions about
polynomial dynamics, and also thank the referee for suggesting
several improvements to the exposition.  The second author was
partially supported by
National Security Agency Grant 06G-067.}
\maketitle

%#######################################################################
%#######################################################################
%#######################################################################

\section{Introduction}

One of the main topics in complex dynamics is the study of
orbits of polynomial maps: namely, for $f\in\C[X]$ and $x_0\in\C$, the
set $\OO_f(x_0):=\{x_0,f(x_0),f(f(x_0)),\dots\}$.  We prove the following
result about intersections of orbits.

\begin{thm}
\label{complex}
Let $x_0,y_0\in\C$ and $f,g\in\C[X]$ with\/ $\deg(f)=\deg(g) > 1$.
If\/ $\OO_f(x_0)\cap\OO_g(y_0)$ is infinite, then $f$ and $g$ have a
common iterate.
\end{thm}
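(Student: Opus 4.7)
The plan is to first descend to the algebraic setting. By a standard spreading-out and specialization argument, I may reduce to the case where $f, g \in \Qbar[X]$ and $x_0, y_0 \in \Qbar$. This makes available the Call--Silverman canonical heights $\hh_f, \hh_g : \Qbar \to \R_{\geq 0}$, which satisfy $\hh_f \circ f = d \hh_f$, $\hh_f = h + O(1)$ where $h$ is the standard Weil height, and the Northcott property that $\hh_f(z) = 0$ exactly when $z$ is $f$-preperiodic (and similarly for $g$). Since $\OO_f(x_0)$ is infinite, $x_0$ is not $f$-preperiodic, so $\hh_f(x_0) > 0$; likewise $\hh_g(y_0) > 0$. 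From $f^n(x_0) = g^m(y_0)$ and $|\hh_f - \hh_g| = O(1)$,
\[ d^n \hh_f(x_0) = \hh_f\bigl(g^m(y_0)\bigr) = d^m \hh_g(y_0) + O(1), \]
which forces $n - m$ to take only finitely many values along the infinitely many intersection pairs. After passing to a subsequence and replacing $x_0$ or $y_0$ by a fixed forward iterate, I may assume $f^n(x_0) = g^n(y_0)$ for infinitely many $n$.

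Now introduce the split map $F : \A^2 \to \A^2$, $F(x, y) = (f(x), g(y))$, the diagonal $\Delta = \{x = y\}$, and $P_0 = (x_0, y_0)$. The reformulated hypothesis says $S := \{n \geq 0 : F^n(P_0) \in \Delta\}$ is infinite. \emph{The main obstacle}, which is really the dynamical content of the theorem, is to show that $S$ contains a full arithmetic progression: there exist $a \geq 0$ and $b \geq 1$ such that $F^{a+bn}(P_0) \in \Delta$ for every $n \geq 0$, equivalently the entire $F^b$-orbit of $F^a(P_0)$ lies on $\Delta$. This is a special case of the dynamical Mordell--Lang conjecture for the split endomorphism $F$ and the curve $\Delta$. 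The approach I would take is the $p$-adic analytic-arc method: choose a prime $p$ of good reduction, pass to a $p$-adic neighborhood on which some iterate $F^N$ is close enough to the identity that its orbit is interpolated by a $p$-adic analytic map $\gamma : \Z_p \to \A^2(\C_p)$; then $\gamma^{-1}(\Delta)$ is the zero set of a $p$-adic analytic function on $\Z_p$, and by Strassmann's theorem this set is either finite or all of $\Z_p$. The subtlety is that $F$ is étale only off the critical locus of $f' \cdot g'$, so the $p$-adic lift of the relevant forward iterate must be arranged to lie in an étale neighborhood.

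Granting this arithmetic-progression reduction, let $P = F^a(P_0) \in \Delta$ and write $P = (x', x')$ with $x' = f^a(x_0)$, so $x'$ still has infinite $f$-orbit. For every $n \geq 0$ the condition $F^{nb}(P) \in \Delta$ reads $f^{nb}(x') = g^{nb}(x')$. Setting $q(z) = f^b(z) - g^b(z)$, the identity
\[ q\bigl(f^{nb}(x')\bigr) = f^{(n+1)b}(x') - g^b\bigl(g^{nb}(x')\bigr) = f^{(n+1)b}(x') - g^{(n+1)b}(x') = 0 \]
shows that $q$ vanishes on the infinite $f^b$-orbit of $x'$. Hence $q \equiv 0$, so $f^b = g^b$, exhibiting the desired common iterate.
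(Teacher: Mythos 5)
Your opening reduction to $\Qbar$, the height computation forcing $n - m$ to stabilize, and the closing algebraic step (that $q = f^b - g^b$ vanishes on the infinite $f^b$-orbit of $x'$, hence $f^b = g^b$) are all fine. The genuine gap is the step you yourself flag as the main obstacle: to pass from ``$S = \{n : F^n(P_0) \in \Delta\}$ is infinite'' to ``$S$ contains a full arithmetic progression,'' you invoke the dynamical Mordell--Lang conjecture for the split map $F = (f,g)$ and the diagonal. This is essentially circular: Theorem~\ref{complex} \emph{is} a special case of Conjecture~\ref{dynamical M-L}, and the point of the paper is to prove instances of that conjecture by a method that avoids the $p$-adic analytic-arc argument. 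The $p$-adic route you sketch meets exactly the obstruction the introduction singles out: since $\deg f, \deg g > 1$, the map $F$ is not \'etale, so there is no general way to find a $p$-adic neighborhood of the forward orbit on which some iterate of $F$ is interpolated by an analytic flow on $\Z_p$ with a uniform radius of convergence. Writing that ``the $p$-adic lift must be arranged to lie in an \'etale neighborhood'' names the difficulty but does not remove it; no unconditional construction of this kind was (or is) available, and the cited work on $p$-adic logarithms proves only special cases under extra hypotheses on the reduction behavior of the orbit.

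The paper's actual route is entirely different and unconditional. After reducing to a number field, it observes that for every $k$ the separated-variable equation $f^k(X) = g^k(Y)$ has infinitely many $S$-integral solutions, applies the Bilu--Tichy classification (Theorem~\ref{BTthm}, Corollary~\ref{BTcor}) together with Ritt-style rigidity of polynomial decomposition (Lemma~\ref{uniq}, Lemma~\ref{Halblem}), and via Lemma~\ref{reduction} and Proposition~\ref{linearprop} deduces that some $f^r$ and $g^r$ agree up to composition with a linear, and in fact eventually coincide. Also, your ``standard spreading-out and specialization argument'' conceals real work: the paper devotes Section~\ref{over C} to producing a good specialization, which requires both a dynamical analogue of Silverman's specialization theorem (Proposition~\ref{Silverman case}, using Benedetto's theorem on canonical heights of non-isotrivial polynomials) and a finiteness statement, via Ritt's classification of commuting-iterate pairs, for specializations that acquire a common iterate (Proposition~\ref{Ritt case}). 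Neither of these is routine.
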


The pairs of complex polynomials with a common iterate were determined
by Ritt \cite{Rittit}; in Proposition~\ref{Rittprop} we state
Ritt's result in the above case $\deg(f)=\deg(g)$.

Our motivation comes from arithmetic geometry.  Fundamental prog\-ress
in this subject has been driven by the Mordell-Lang conjecture
on intersections of subgroups and subvarieties of algebraic groups.
This conjecture was proved by Faltings \cite{Faltings} and
Vojta \cite{V1}:
%, and McQuillan \cite{McQ}:
\begin{thm}
\label{T:F}
Let $G$ be a semiabelian variety over\/ $\C$, let $V$ be a
subvariety, and let\/ $\Gamma$ be a finitely generated subgroup of
$G(\C)$. Then $V(\C)\cap\Gamma$ is a finite union of cosets
of subgroups of\/ $\Gamma$.
\end{thm}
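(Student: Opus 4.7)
The plan is to follow the Vojta--Faltings strategy, proceeding in several stages. First I would reduce to the case where $G$ is an abelian variety and $V$, $G$, $\Gamma$ are all defined over a number field $K$. The reduction from semiabelian to abelian uses Chevalley's structure theorem together with Vojta's extension of the height machine to semiabelian varieties (or, alternatively, McQuillan's compactification argument); the reduction from $\C$ to $\Kbar$ is achieved by spreading out the triple $(G,V,\Gamma)$ over a finitely generated $\Z$-subalgebra of $\C$ and specializing at a suitable closed point, using that $\Gamma$ is finitely generated.

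Second, I would proceed by induction on $\dim V$, with the base case $\dim V=0$ being immediate. Applying the Ueno--Kawamata structure theorem, let $B \subset G$ be the maximal abelian subvariety whose translates stabilize $V$; the quotient map $\pi \colon G \to G/B$ carries $V$ onto a subvariety $V'$ whose stabilizer in $G/B$ is finite. If $B$ is nontrivial, a fibration argument over $V'$ combined with the inductive hypothesis reduces the problem to $V'$, so I may assume the stabilizer of $V$ in $G$ is finite; the conclusion then takes the sharper form that $V(\C)\cap\Gamma$ is itself a finite set.

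Third, I would invoke Vojta's higher-dimensional inequality. Identifying $\Gamma\otimes\R$ with a finite-dimensional Euclidean space via the N\'eron--Tate pairing, one regards $V(\Kbar)\cap\Gamma$ as a subset of this space. Vojta's inequality gives a quantitative angular separation between any two points $P,Q$ of $V(\Kbar)\cap\Gamma$ whose N\'eron--Tate heights stand in a prescribed ratio and which lie in a sufficiently narrow cone about a common direction. A standard covering of the unit sphere in $\Gamma\otimes\R$ by such cones then forces the points of $V(\Kbar)\cap\Gamma$ in each cone to have bounded height, and the Northcott property yields finiteness. Pulling the finiteness statement back through the Ueno fibration produces the required finite union of cosets.

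The main obstacle is Vojta's inequality itself, which is the deep input of the whole argument and whose proof rests on Faltings's product theorem in Arakelov geometry. The combinatorial reductions, the Ueno fibration and the sphere-covering pigeonhole are by now fairly standard once the right framework is set up, but the Arakelov-theoretic input is highly nontrivial and constitutes the technical heart of the program; everything else should be regarded as bookkeeping around it.
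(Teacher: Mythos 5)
The paper does not prove Theorem~\ref{T:F}. It is quoted verbatim as a known result, attributed to Faltings \cite{Faltings} and Vojta \cite{V1}, and used as a black box (for instance, to derive the consequence for orbits under endomorphisms of semiabelian varieties, via \cite{GT-IMRN}). There is therefore no proof in the paper against which to compare your sketch.

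As a summary of the literature, your outline of the Faltings--Vojta strategy is broadly accurate in spirit (specialization to number fields, Ueno/Kawamata fibration to kill the stabilizer, N\'eron--Tate embedding of $\Gamma\otimes\R$, Vojta's inequality plus a cone-covering and Northcott), but one step is misstated: the passage from semiabelian to abelian is not a reduction. Chevalley's exact sequence $1\to T\to G\to A\to 1$ does not split, and one cannot deduce the semiabelian case from the abelian case; Vojta's contribution was precisely to rework the entire height-theoretic and Arakelov-theoretic apparatus (a suitable compactification, the height machine, and the inequality itself) directly for semiabelian $G$. Describing this as reducing to an abelian variety and then invoking the abelian-variety version of Vojta's inequality collapses the hard part of \cite{V1}. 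If you intend this as an exposition of the proof, that step should be rephrased as ``extend the argument to the semiabelian setting'' rather than ``reduce to the abelian setting.'' Within this paper, though, the appropriate treatment of Theorem~\ref{T:F} is simply to cite it, as the authors do.
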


Recall that a semiabelian variety (over $\C$) is an extension of an
abelian variety by a torus $(\bG_m)^k$.  Theorem~\ref{T:F} has the
following consequence \cite{GT-IMRN}: if $\phi$ is an endomorphism of
$G$ of degree $>1$, then any orbit of $\phi$ has finite intersection
with a subvariety $V \subset G$, unless $V$ contains a positive
dimensional subvariety which is periodic under $\phi$.  In the case
$G=\bG_m^k$ (which was first treated by Laurent \cite{Laurent}), this
implies that if a subvariety $V\subset\bG_m^k$ contains no positive
dimensional subvariety which is periodic under the map
$\psi:(X_1,\dots,X_k)\mapsto (X_1^{e_1},\dots,X_k^{e_k})$ (with
$e_i\in\Z$ and $e_i\ge 2$), then $V$ contains at most finitely many
points of any $\psi$-orbit in $\A^k$.

It is natural to ask whether a similar conclusion holds for any
polynomial action on $\A^k$.  The first two authors have proposed the
following conjecture:
\begin{conjecture}
\label{dynamical M-L}
Let $f_1,\dots,f_k$ be polynomials in $\C[X]$, and let $V$ be a subvariety
of $\A^k$ which contains no positive dimensional subvariety that is
periodic under the action of $(f_1,\dots,f_k)$ on $\A^k$.  Then $V(\C)$
has finite intersection with each orbit of $(f_1,\dots,f_k)$ on $\A^k$.
\end{conjecture}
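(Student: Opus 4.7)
The plan is as follows. Set $F := (f_1,\dots,f_k)$, viewed as a map $\A^k\to\A^k$, and let $x_0=(x_{0,1},\dots,x_{0,k})\in\A^k(\C)$ be a point whose orbit $\OO_F(x_0)$ meets $V(\C)$ infinitely often. Consider the set of return times
\[
S := \{n\ge 0 : F^n(x_0)\in V(\C)\}.
\]
The goal is to show that if $S$ is infinite, then $V$ contains a positive-dimensional $F$-periodic subvariety. The key reduction is to prove that $S$ is a finite union of arithmetic progressions (together with a finite set). Granted this, pick an arithmetic progression $\{c+nd : n\ge 0\}\subset S$: replacing $F$ by $F^d$ and $x_0$ by $F^c(x_0)$ reduces to the case that the full forward orbit of the new starting point lies in $V$. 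Its Zariski closure $W$ is then $F^d$-invariant and contained in $V$, and is positive-dimensional provided the new orbit is infinite, which happens unless every coordinate is preperiodic; the fully preperiodic case gives only finitely many orbit points to begin with.

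To establish the arithmetic-progression structure of $S$, I would follow the $p$-adic approach of Skolem, Mahler, and Lech. After descending to a finitely generated subring of $\C$ and embedding into a $p$-adic field, one seeks a prime $p$ of good reduction at which a suitable iterate $F^N$ admits a $p$-adic analytic linearization (or Boettcher-type uniformization) near an attracting fixed point to which the relevant coordinate orbits are drawn. This would let one write $F^{c+nN}(x_0)$ as a $p$-adic analytic function of $n\in\Z_p$ valued in a polydisk, for each residue $c$ modulo $N$. The condition $F^{c+nN}(x_0)\in V$ then becomes the vanishing of finitely many $p$-adic analytic functions on $\Z_p$; each such function either vanishes identically (producing an arithmetic progression in $S$) or has finite zero set, which is exactly what is needed.

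The main obstacle, in my view, is securing a usable $p$-adic linearization: for an arbitrary polynomial $f_i$ defined over a number field, no prime of good reduction need provide a fixed point with unit multiplier, and the interplay between the rates of attraction at the $k$ coordinates must be controlled uniformly. For $\bG_m^k$ this is handled using the group structure and Kronecker's theorem, and an analogous arithmetic input must be found here; dealing with polynomials having parabolic or superattracting fixed points is particularly delicate. Once the return-time step is established, Theorem \ref{complex} and Ritt's classification give a tool for describing the $F$-periodic subvarieties produced: projecting any such subvariety to a pair of coordinates yields an infinite intersection of two one-variable orbits, so in the equal-degree case Theorem \ref{complex} forces $f_i$ and $f_j$ to share a common iterate. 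This strongly suggests first attacking the conjecture under the assumption that $\deg f_1=\cdots=\deg f_k$, and using the unequal-degree situation to isolate the genuinely new difficulties.
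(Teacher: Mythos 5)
The statement you are attacking is a \emph{Conjecture} in the paper, not a theorem; the authors do not prove it, and indeed the whole paper is devoted to establishing only the very special case where $V$ is a line in $\A^2$ (Theorems~\ref{main result} and \ref{complex2}). So there is no ``paper's own proof'' to compare against, and your outline should be assessed as a plan of attack on an open problem.

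Within the outline, the reduction step is sound: if the return-time set $S$ contains an arithmetic progression $c+d\N$, then the Zariski closure $W$ of $\{F^{c+dn}(x_0): n\ge 0\}$ satisfies $F^d(W)\subseteq W$; since $F^d$ is a finite map, $F^d$ eventually permutes the top-dimensional components of $W$, so some union of components is genuinely $(f_1,\dots,f_k)$-periodic, and $W$ is positive-dimensional unless $x_0$ is preperiodic (in which case $S$ is finite anyway). But the hard step --- a $p$-adic analytic parametrization of the orbit in the style of Skolem--Mahler--Lech --- is exactly where the argument fails, and you have correctly identified the obstruction. For a general tuple of polynomials one cannot find a prime of good reduction at which a suitable iterate admits a $p$-adic linearization near the relevant orbit: critical points and ramification along the orbit, superattracting and parabolic cycles, and the fact that the maps $f_i$ are not \'etale (a difficulty the introduction explicitly singles out as distinguishing this conjecture from the semiabelian and Drinfeld-module settings) all block it. The paper's reference \cite{logarithmic} pursues precisely this $p$-adic strategy and, for these reasons, obtains only special cases. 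Meanwhile the route the authors actually take for the line case is entirely different and does not pass through return-time structure at all: Weil-height growth to force $\deg f=\deg g$, the Bilu--Tichy theorem on separated-variable equations $F(X)=G(Y)$ with infinitely many $S$-integral solutions, Ritt's theory of polynomial decomposition and common iterates, and a Silverman-style specialization argument to reduce from function fields to number fields. Your proposal is thus an honest sketch of a known alternative strategy together with the gap that makes it insufficient; it neither proves the conjecture nor recovers the partial results the paper does establish.
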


This conjecture fits into Zhang's far-reaching system of dynamical
conjectures \cite{ZhangLec}.  Zhang's conjectures include dynamical
analogues of the Manin-Mumford and Bogomolov conjectures for abelian
varieties (now theorems of Raynaud \cite{Raynaud1, Raynaud2}, Ullmo \cite{Ullmo},
and Zhang \cite{ZhangBog}), as well as a conjecture about the Zariski
density of orbits of points under fairly general maps from a
projective variety to itself.  The latter conjecture is related to our
Conjecture~\ref{dynamical M-L}, though neither conjecture contains the
other.

A $p$-adic version of Conjecture~\ref{dynamical M-L} has been proved
in certain special cases \cite{logarithmic}.
Also, an analogue of Conjecture~\ref{dynamical M-L} has been proved
in positive characteristic, for the additive group under the action
of an additive polynomial (Drinfeld module) \cite{compositio}.
This result is a special case of a more general conjecture
proposed by Denis \cite{Denis-conjectures}, in which orbits are
replaced with arbitrary submodules under the action of a Drinfeld module.

The techniques of Laurent \cite{Laurent}, Faltings \cite{Faltings},
and Vojta \cite{V1} require conditions that are not implied
by the hypotheses of Conjecture~\ref{dynamical M-L}.  Laurent's proof
uses the fact that the torsion points on a torus are defined over
a cyclotomic field; the fields of definition of
preperiodic points of general polynomials admit no such simple
description.  Vojta's proof (which generalizes that of Faltings)
relies on the fact that integral points on
semiabelian varieties satisfy a strong diophantine property,
which does not hold for the points in Conjecture~\ref{dynamical M-L}.
Specifically, if $z$ is an $S$-integral point on $\bG_m^k$, then
the coordinates of $z^n$ are $S$-units for all $n$, whereas the coordinates of
points in an orbit of $(f_1,\dots,f_k)$ need not be $S$-units.  Finally,
one crucial difference between the polynomial maps of
Conjecture~\ref{dynamical M-L} and the maps that arise for semiabelian
varieties and Drinfeld modules is that the maps in
Conjecture~\ref{dynamical M-L} are not {\'e}tale in general.

In the present paper we use a new approach to prove the first
non-monomial cases of Conjecture~\ref{dynamical M-L}, when the
variety $V$ is a line in the affine plane.
Our result is as follows, where we write $f^n$ for
the $n\tth$ iterate of the~polynomial~$f$.

\begin{thm}
\label{main result}
Let $K$ be a field of characteristic zero, let $f,g\in K[X]$, and
let $x_0,y_0\in K$.  If the set
\[
\{(f^n(x_0),g^n(y_0)):n\in\N\}
\]
has infinite intersection with a line
$L$ in $\A^2$ defined over $K$, then $L$ is periodic under the
action of $(f,g)$ on $\A^2$.
\end{thm}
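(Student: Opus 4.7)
The strategy is to reduce to Theorem~\ref{complex} via an affine change of coordinates, after first handling degenerate configurations and matching the degrees of $f$ and $g$. I would dispose of the cases where $L$ is axis-parallel or where $\min(\deg f,\deg g)=1$ directly: if $L=\{X=c\}$, the infinite intersection forces $f^n(x_0)=c$ for infinitely many $n$, so $x_0$ is preperiodic and $c$ is periodic under $f$, whence $L$ lies in an $(f,g)$-cycle of vertical lines; the horizontal case is symmetric, and when $f$ or $g$ is linear, a direct comparison of arithmetic growth rates (iterates of a linear polynomial grow at most polynomially in $n$, whereas those of a polynomial of degree $\ge 2$ have doubly-exponential canonical height) rules out infinite intersection except in trivially periodic subcases.

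For the main case, assume $\deg f,\deg g\ge 2$ and write $L:Y=aX+b$ with $a\ne 0$. Set $h(X):=aX+b$ and $\tilde f:=h\circ f\circ h^{-1}$, so that $\tilde f^n(h(x_0))=h(f^n(x_0))$ for every $n$ and $\deg\tilde f=\deg f$. Fixing an embedding $K\hookrightarrow\C$, the hypothesis translates into $\OO_{\tilde f}(h(x_0))\cap\OO_g(y_0)$ being infinite in $\C$.

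The next step, which I expect to be the main obstacle, is to prove $\deg\tilde f=\deg g$. The infinite-intersection hypothesis forces both $x_0$ and $y_0$ to be non-preperiodic under $f$ and $g$ respectively (otherwise one coordinate projection of $\{(f^n(x_0),g^n(y_0))\}$ is finite, and hence so is its intersection with $L$). I would then spread out to a finitely generated $\Z$-subalgebra $R\subset K$ containing all relevant coefficients and points, and specialize $R\to\Qbar$ while preserving non-preperiodicity (the preperiodicity locus is a countable union of proper closed subsets of $\Spec R$ and can be avoided by a Hilbert-irreducibility-style argument). Over the resulting number field, the canonical-height functional equations together with the bound $|\hat h_{\tilde f}(Z)-\hat h_g(Z)|=O(1)$ (valid because each canonical height differs from the Weil height by a constant depending only on the polynomial) combine with the equalities $\tilde f^n(h(x_0))=g^n(y_0)$ to yield
\[
(\deg\tilde f)^n\,\hat h_{\tilde f}(h(x_0))=(\deg g)^n\,\hat h_g(y_0)+O(1)
\]
for infinitely many $n$, with both leading constants strictly positive. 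A growth-rate comparison forces $\deg\tilde f=\deg g$.

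With equal degrees in hand, Theorem~\ref{complex} produces integers $N,M\ge 1$ with $\tilde f^N=g^M$, and $\deg\tilde f=\deg g\ge 2$ forces $N=M$ by comparing degrees. Thus $g^N\circ h=h\circ f^N$, so $(f^N,g^N)$ sends each point $(x,h(x))\in L$ to $(f^N(x),h(f^N(x)))\in L$, proving that $L$ is periodic under the action of $(f,g)$.
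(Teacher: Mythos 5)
Your overall architecture is sound and largely parallels the paper's: conjugating $f$ by the affine map $h(X)=aX+b$ to replace the line by the diagonal (the paper's Lemma~\ref{linear equals diagonal} does exactly this), establishing $\deg f=\deg g$ via canonical heights (Lemma~\ref{same degree}), and then reducing to Theorem~\ref{complex}. The degree-matching argument over a number field and the final deduction $\tilde f^N=g^M\Rightarrow N=M\Rightarrow g^N\circ h=h\circ f^N$ are both correct. However, two steps are stated at a level of generality where the asserted shortcuts do not work.

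\textbf{The specialization step is a genuine gap.} You assert that non-preperiodicity survives a specialization $R\to\Qbar$ because the preperiodicity locus is ``a countable union of proper closed subsets of $\Spec R$'' which ``can be avoided by a Hilbert-irreducibility-style argument.'' This reasoning is invalid: a countable union of proper closed subsets of $\Spec R$ can cover every closed point (already $\Spec\Z$ is a countable union of its closed points, and the same phenomenon occurs for any finitely generated $\Z$-algebra). There is no soft argument here. The paper has to prove a genuine dynamical analogue of Silverman's specialization theorem (Proposition~\ref{Silverman case}), resting on the Call--Silverman convergence result $\lim_{h_C(\alpha)\to\infty}\hh_{f_\alpha}(z_\alpha)/h_C(\alpha)=\hh_f(z)$ and on Benedetto's theorem characterizing points of canonical height zero for non-isotrivial polynomials over function fields; the isotrivial case then needs a further separate argument (Lemma~\ref{height 0 for isotrivial}). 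None of this is captured by invoking Hilbert irreducibility, and without it you have no guarantee of even one good specialization.

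\textbf{The linear case is not as trivial as claimed.} Your growth-rate comparison does dispose of $\min(\deg f,\deg g)=1<\max(\deg f,\deg g)$, but it says nothing when $\deg f=\deg g=1$: there both orbits grow comparably and the statement is \emph{not} ``trivially periodic.'' The paper's Proposition~\ref{linear polynomials} handles this case by reducing the condition $f^n(x_0)=g^n(x_0)$ (for infinitely many $n$) to an equation of the shape $\hat a\alpha^n+\hat b\gamma^n=1$ and appealing to Siegel's theorem on integral points on curves (via Lang). You need some such Diophantine input; a bare growth argument cannot distinguish, say, $f(X)=2X$ from $g(X)=3X+1$ on the diagonal.

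A smaller stylistic point: since you are invoking Theorem~\ref{complex} as a black box, you could equally invoke Theorem~\ref{complex2} directly and skip the conjugation trick, which would slightly streamline the reduction; the paper phrases the corresponding result (Lemma~\ref{linear equals diagonal2}) in exactly this way.
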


\begin{comment}
One can also express this hypothesis by saying that
$L(K)\cap\OO_{(f,g)}((x_0,y_0))$ is infinite, which more closely
resembles the language of Theorem~\ref{T:F}.
\end{comment}

Using interpolation (for instance), one can construct
examples in which this intersection is finite but larger than
any prescribed bound.

\begin{comment}
For instance, pick $f\in K[X]$,
$n\in\N$, and $x_0,z\in K$ such that $\OO_f(x_0)$ and
$\OO_f(z)$ are infinite and disjoint.
By interpolation, there exists $g\in K[X]$ with
$g(z)=z$ and $g^k(x_0)=f^k(x_0)$ for $k\in\{1,\dots,n\}$.
Then $\{\left(f^k(x_0),g^k(x_0)\right):k\in\N\}$ intersects the diagonal
in at least $n$ points.  However, Theorem~\ref{main result}
implies the intersection cannot be infinite, since periodicity of
the diagonal under the $(f,g)$ action means that $f^k=g^k$
for some $k\in\N$, which is incompatible with the orbit lengths
of $z$ under $f$ and $g$.
\end{comment}

\begin{comment}
We note that there are numerous examples of pairs of orbits which intersect
in many (but not infinitely many) points. For example, given any
$f\in K[X]\setminus K$, and given any $x_0, y_0\in K$, and
given any positive integer $n$, we can always find (through interpolation)
a polynomial $g\in K[X]$ such that $g^k(y_0) = f^k(x_0)$ for
$k\in\{1,\dots,n\}$. Moreover, if we also impose that $g(a) = a$ for some
point $a\in K$ which is not preperiodic for $f$, then we conclude that
there is no $k\in\N$ such that $f^k = g^k$, and the $(f,g)$-orbit of
$(x_0,y_0)$ intersects the diagonal subvariety of $\A^2$ in at least $n$
points. However, by our Theorem~\ref{main result}, the intersection must
be finite (as the diagonal subvariety of $\A^2$ is not periodic under the
$(f,g)$-action).
\end{comment}

Along the lines of Theorem~\ref{main result}, we will prove the
following generalization of Theorem~\ref{complex}.

\begin{thm}
\label{complex2}
Let $K$ be a field of characteristic zero, let $\alpha,\beta,x_0,y_0\in K$
with $\alpha\ne 0$, and let $f,g\in K[X]$ with $\deg(f)=\deg(g)>1$.
If infinitely many points of $\OO_f(x_0)\times\OO_g(y_0)$ lie on the
line $Y=\alpha X+\beta$, then $g^k(\alpha X+\beta)=\alpha f^k(X)+\beta$
for some positive integer $k$.
\end{thm}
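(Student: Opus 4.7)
The plan is to deduce Theorem~\ref{complex2} from Theorem~\ref{complex} by an elementary change of coordinates. Write $\sigma(X):=\alpha X+\beta\in K[X]$, which is invertible because $\alpha\ne 0$, and set $\tilde{g}:=\sigma^{-1}\circ g\circ\sigma\in K[X]$ and $\tilde{y}_0:=\sigma^{-1}(y_0)\in K$. Then $\deg(\tilde{g})=\deg(g)=\deg(f)$ and $\tilde{g}^m(\tilde{y}_0)=\sigma^{-1}(g^m(y_0))$ for every $m\ge 0$, so the pair $(f^n(x_0),g^m(y_0))$ lies on the line $Y=\alpha X+\beta$ if and only if $f^n(x_0)=\tilde{g}^m(\tilde{y}_0)$.

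First I would observe that the infiniteness hypothesis forces both $\OO_f(x_0)$ and $\OO_g(y_0)$ to be infinite: distinct points on the line $Y=\alpha X+\beta$ (which is neither horizontal nor vertical, since $\alpha\ne 0$) have distinct first and distinct second coordinates. In particular, the iterates $f^n(x_0)$ are pairwise distinct, so infinitely many points of the line correspond to infinitely many distinct common elements of $\OO_f(x_0)$ and $\OO_{\tilde{g}}(\tilde{y}_0)$. Hence $\OO_f(x_0)\cap\OO_{\tilde{g}}(\tilde{y}_0)$ is infinite.

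Next I would reduce to the complex setting: the coefficients of $f$, $g$, $\sigma$ and the points $x_0$, $\tilde{y}_0$ lie in a finitely generated subfield of $K$, which embeds into $\C$. Theorem~\ref{complex} applied to $(f,\tilde{g})$ then produces a common iterate $f^a=\tilde{g}^b$ for some positive integers $a,b$. Comparing degrees gives $\deg(f)^a=\deg(\tilde{g})^b$, and since $\deg(f)=\deg(\tilde{g})>1$ this forces $a=b=:k$. Unwinding the conjugation yields $f^k=\tilde{g}^k=\sigma^{-1}\circ g^k\circ\sigma$, i.e.\ $g^k(\alpha X+\beta)=\alpha f^k(X)+\beta$, which is the desired conclusion.

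The entire substantive content of the argument is delegated to Theorem~\ref{complex}; given that input, the remaining steps are just the bookkeeping for the conjugation, the observation that the two orbits must be infinite, and the descent from $K$ to $\C$, all of which are routine. Thus the principal obstacle lies in proving Theorem~\ref{complex}, not Theorem~\ref{complex2}.
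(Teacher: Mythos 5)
Your conjugation argument is exactly the reduction the paper records as Lemma~\ref{linear equals diagonal2} (together with the observation, also made at the start of Section~\ref{trivial action}, that one may pass to a finitely generated subfield and hence embed in $\C$), and every step of it checks out: the substitution $\tilde g=\sigma^{-1}\circ g\circ\sigma$ converts the line condition to an orbit-intersection condition, distinct points on a line of nonzero slope have distinct coordinates, and the degree comparison forces a common iterate $f^a=\tilde g^b$ to have $a=b$. So far so good.

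The problem is what you do with this reduction. You treat Theorem~\ref{complex} as an independent black box, but in the paper Theorem~\ref{complex} is not proved separately from Theorem~\ref{complex2} --- it is precisely the special case $K=\C$, $\alpha=1$, $\beta=0$ of Theorem~\ref{complex2}, and the paper derives it from Theorem~\ref{complex2} rather than the other way around. The paper's actual proof of Theorem~\ref{complex2} consists of (i) the diagonal reduction you reproduce, (ii) the number-field case (Theorem~\ref{number fields}, Sections~\ref{proof main}--\ref{proofs}, using Bilu--Tichy plus polynomial-decomposition lemmas), and (iii) a specialization argument from finitely generated fields down to number fields (Section~\ref{over C}, via Propositions~\ref{Ritt case} and \ref{Silverman case}). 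Your proposal contains only (i). Invoking Theorem~\ref{complex} does not discharge (ii) and (iii); it merely restates the target in normalized form. You are candid that ``the entire substantive content is delegated to Theorem~\ref{complex},'' which is accurate, but it means the proposal is a correct reduction rather than a proof: as written it is circular relative to the paper's logical structure, and the genuinely hard part (the Diophantine and specialization machinery behind the diagonal case) is entirely untouched.
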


This result is neither stronger nor weaker than Theorem~\ref{main result}:
only Theorem~\ref{main result} applies to
polynomials of distinct degrees, but if $\deg(f)=\deg(g)>1$ then
Theorem~\ref{complex2} strengthens Theorem~\ref{main result} by
replacing $\OO_{(f,g)}((x_0,y_0))$ with $\OO_f(x_0)\times\OO_g(y_0)$.

In the simple case that $f(X)=\alpha X$ and $g(X)=\beta X$ with
$\alpha, \beta\in K^*$, Theorem~\ref{main result} says that, for any
$u,v,w\in K$ that are not all zero, if $u\alpha^n+v\beta^n=w$ for
infinitely many $n$ then $\alpha$ or $\beta$ is a root of unity.
Already the result is nontrivial in this case: it is a consequence of
Siegel's theorem on integral points of curves, or it could be proved
directly using the techniques from Siegel's proof.

One consequence of Theorem~\ref{main result} is that if $f$ and $g$
have distinct degrees
then $\OO_{(f,g)}((x_0,y_0))$ has finite intersection with any line.
We do not know whether the analogous result is true for
$\OO_f(x_0)\times\OO_g(y_0)$ (for lines which are neither horizontal nor
vertical, and for polynomials $f,g$ with no common iterate).

Our proofs of Theorems~\ref{main result} and \ref{complex2} involve
arguments of several flavors.  For general $K$, we will prove there is a
partially-defined map (`specialization') from $K$ to a number field
$K_0$ which allows us to deduce the results for $K$ as a consequence of
the results for $K_0$.  Our proof of this fact relies on Ritt's
classification of polynomials with a common iterate, as well as a
dynamical analogue of a result of Silverman (from
\cite{Silverman_specialization}) on specialization of nontorsion
elements of abelian varieties over function fields.

We reduce the number field case of Theorem~\ref{main result} to the
corresponding case of Theorem~\ref{complex2} as follows.
First, by comparing Weil heights of $f^n(x_0)$ and $g^n(x_0)$,
we conclude that $f$ and $g$ must have the same degree if
$\OO_{(f,g)}((x_0,y_0))$ contains infinitely many points on some line.
Next we use Siegel's theorem on integral points to prove
Theorem~\ref{main result} when $f$ and $g$ are linear.

\begin{comment}
For a given algebraic number $x_0$, and a polynomial
 $f\in \overline{\Q}[X]$, the
logarithmic Weil height of f^n(x_0) grows linearly in $n$ (if $f$ is a linear
polynomial), and exponentially as \deg(f)^n (if $\deg(f) > 1$). This
allows us (see Proposition~\ref{same degree}) to conclude that if a line
contains infinitely many points $(f^n(x_0),g^n(y_0))$, then $\deg(f) =
\deg(g)$.
\end{comment}

The strategy of our proof of Theorem~\ref{complex2} for number
fields $K$ is as follows, where we simplify the discussion by
addressing the case that the line is the diagonal and all polynomials and
points are defined over $\Z$.  Suppose there are integers $x_0,y_0$
and polynomials $f,g\in\Z[X]$ such that $\OO_f(x_0)\times\OO_g(y_0)$ has
infinite intersection with the diagonal in $\A^2$.  Then, for every
$m$, there are infinitely many integer solutions to the Diophantine
equation $f^m(X)=g^m(Y)$.  This is an instance of a `separated
variable' Diophantine equation $F(X)=G(Y)$, of which special cases
have been studied for many years.  The definitive finiteness result
for these equations was proved in 2000 by Bilu and Tichy \cite{BT}; we
will use their result (together with various new results about polynomial
decomposition) in order to obtain some information about $f$ and $g$
from the fact that $f^m(X)=g^m(Y)$ has infinitely many integer
solutions.  Our result will follow upon combining the information
deduced for each $m$.
\begin{comment}
We note that this technique is quite different
from both the logarithmic approach of \cite{compositio} and \cite{logarithmic},
and the Diophantine approximation approach of \cite{Faltings} and \cite{V1}.
\end{comment}

Although the Bilu-Tichy result has not previously been applied to
arithmetic geometry or dynamics, inspection of its proof suggests it
fits naturally into both topics.  Namely, the two key ingredients in
its proof are Siegel's theorem on integral points on curves, and Ritt's
results on functional decomposition of complex polynomials.

In more detail, Bilu and Tichy listed five explicit families of
`standard pairs' of polynomials $(F_1,G_1)$ such that,
if $F(X)=G(Y)$ has infinitely many integer solutions, then
there is a standard pair $(F_1,G_1)$ for which
$F=E\circ F_1\circ a$ and $G=E\circ G_1\circ b$,
where $E,a,b\in\Q[X]$ and
$\deg(a)=\deg(b)=1$.
When applying this result to specific polynomials
$F$ and $G$, the main work involved is to determine the various
different ways that $F$ and $G$ can be written as compositions
of lower-degree polynomials, in order to determine the
possibilities for $E$.  In practice, unless $F$ and $G$
are specifically constructed with decomposability in mind, it
turns out that any randomly chosen $F$ and $G$ are indecomposable,
in which case it is quite simple to apply the Bilu-Tichy criterion
(after one has proven this indecomposability).  Based on this
principle, dozens of recent papers have applied the Bilu-Tichy criterion
when $F$ and $G$ come from basically any class of polynomials
one can think of: Bernoulli polynomials, falling factorials,
power-sum polynomials, Taylor polynomials for $e^x$,
Jacobi polynomials, Laguerre polynomials, Hermite polynomials,
Meixner polynomials, Krawtchouk polynomials, etc.\ (cf., e.g.,
\cite{BT2,BT3,BT4}).  In every
case, the polynomials were either indecomposable or had just
one nontrivial decomposition.

Our situation is quite different, since we are applying Bilu-Tichy to
polynomials $F=f^m$ and $G=g^m$, which by their very nature are far
from indecomposable.  Moreover, we are doing this for arbitrary $f$
and $g$, which themselves might have various different decompositions.
Thus we are forced to prove new results about functional
decompositions of polynomials.
\begin{comment}
We expect that these methods will
be useful for other applications of the Bilu-Tichy criterion to
families of decomposable polynomials $F$ and $G$.
\end{comment}

The rest of this paper is organized as follows.  
We begin with some preliminary results about Diophantine equations
and functional decomposition.  In Section~\ref{proof main} we prove
Theorem~\ref{complex} in case $K$ is a number field, modulo the
proof of one technical proposition which we give in Section~\ref{proofs}.
In Section~\ref{trivial action} we prove
Theorem~\ref{main result} when either $K$ is a number field or the
polynomials are linear.
Then in Section~\ref{over C} we prove
Theorems~\ref{main result} and \ref{complex2}.
In the final section we state
some conjectures and directions for further research.

\begin{notation}
  Throughout this paper, $f^n$ denotes the $n\tth$ iterate of the
  polynomial $f$.  We also use $\alpha^n$ and $X^n$ for the $n\tth$
  power of a constant or of $X$ itself, but this should not cause
  confusion.  We write $\N$ for the set of positive integers.  We
  write $\overline{K}$ for an algebraic closure of the field $K$.
  By a `nonarchimedean place' of a number field $K$, we mean a
  maximal ideal of the ring $\OO_K$ of algebraic integers in $K$.
  If $S$ is a finite set of nonarchimedean places of a number field $K$,
  then the ring of $S$-integers of $K$ is the intersection of the
  localizations of $\OO_K$ at all nonarchimedean~places~outside~$S$.
\end{notation}

%#######################################################################
%#######################################################################

\section{Previous results}
\label{previous}

In this section we present some known results which will be used in our proof. 

%#######################################################################

\subsection{Diophantine equations}\label{BTsec}
We will make crucial use of a recent result of Bilu and Tichy
\cite[Thm.~10.5]{BT} describing all $F,G\in \Z[X]$ for which $F(X)=G(Y)$ has
infinitely many integer solutions. In fact, they proved a version
for $S$-integers in an arbitrary number field.  We state
their result in the special case $\deg(F)=\deg(G)$ arising in our
proof; in this special case the statement is somewhat simpler than in
the general situation.

\begin{thm}
\label{BTthm}
Let $K$ be a number field, $S$ a finite set of nonarchimedean places
of $K$, and $F,G\in K[X]$ with $\deg(F)=\deg(G)>1$.  Suppose
$F(X)=G(Y)$ has infinitely many solutions in the ring of $S$-integers of $K$.
Then $F=E\circ F_1\circ a$ and
$G=E\circ G_1\circ b$, where $E,a,b\in K[X]$ with
$\deg(a)=\deg(b)=1$, and $(F_1,G_1)$ or $(G_1,F_1)$ is one of the following pairs:
\begin{enumerate}
\item $(X, X)$;
\item $(X^2, c\circ X^2)$ \quad\text{with $c\in K[X]$ linear;}
\item $(D_2(X,\alpha)/\alpha, D_2(X,\beta)/\beta)$
 \quad\text{with $\alpha,\beta\in K^*$;}
\item $(D_n(X,\alpha), -D_n(X\cos(\pi/n),\alpha))$ \quad\text{with $\alpha\in K$,}
\end{enumerate}
where in the fourth case $n\in\N$ satisfies $\cos(2\pi/n)\in K$.
\end{thm}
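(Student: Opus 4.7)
The plan is to follow Bilu and Tichy's strategy of combining Siegel's theorem on $S$-integral points with Ritt's theorems on polynomial decomposition. An infinitude of $S$-integral solutions of $F(X)=G(Y)$ forces some $\overline{K}$-irreducible component $C$ of the affine curve $F(X)-G(Y)=0$ to contain infinitely many $S$-integral points, so by Siegel's theorem $C$ must have geometric genus zero and at most two points at infinity; call such a $C$ a \emph{Siegel-admissible} component. The goal is to classify the pairs $(F,G)$ for which such a $C$ can exist.

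First I would normalize. Using Ritt's results on common left composition factors, one extracts a polynomial $E\in K[X]$ of maximal degree together with $F_1,G_1\in K[X]$ and linear $a,b\in K[X]$ such that $F=E\circ F_1\circ a$ and $G=E\circ G_1\circ b$, where no nontrivial left composition factor is shared by $F_1$ and $G_1$. Pulling out $E$ preserves the Siegel-admissible component (since the map $(X,Y)\mapsto (a^{-1}(X),b^{-1}(Y))$ is a birational change of coordinates and composing with $E$ can only merge branches), so it suffices to classify the reduced pairs $(F_1,G_1)$ of equal degree for which $F_1(X)-G_1(Y)=0$ has a Siegel-admissible component.

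The classification is the heart of the argument. The points at infinity on $C$ correspond to pairs of branches of $F_1$ and $G_1$ at $\infty$, each with ramification index $d=\deg F_1=\deg G_1$; a Riemann--Hurwitz computation on a rational component, combined with analysis of the monodromy groups of $F_1(X)-t$ and $G_1(X)-t$ over $K(t)$, forces those monodromy groups to be cyclic or dihedral. By the Ritt/Fried classification, the only polynomials whose associated monodromy is cyclic or dihedral are, up to composition with linear polynomials, the pure powers $X^n$ (cyclic case) and the Dickson polynomials $D_n(X,\alpha)$ (dihedral case, with the standard uniformization $D_n(X+\alpha/X,\alpha)=X^n+(\alpha/X)^n$). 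Enumerating compatible pairings yields the four families (1)--(4).

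The main obstacle is verifying that this enumeration is exhaustive and cutting the list down precisely to (1)--(4) in the equal-degree setting. Showing each listed pair actually produces a Siegel-admissible component uses the rational parametrization of the Dickson curve $D_n(X,\alpha)-D_n(Y,\alpha)=0$ via $X=u+\alpha/u$, $Y=\zeta u+\alpha/(\zeta u)$ for $n$th roots of unity $\zeta$, which decomposes it into genus-zero factors and explains the sign and $\cos(\pi/n)$ twist appearing in pair (4). Ruling out other combinations requires a careful genus count for the remaining components of $D_n(X,\alpha)-D_m(Y,\beta)$ and for power/Dickson mixtures, together with Ritt's second decomposition theorem to prevent hidden decomposability of $F_1$ or $G_1$ from producing exotic low-genus components; finally, the rationality requirement $\cos(2\pi/n)\in K$ arises from the condition that the Dickson polynomial of degree $n$ with the relevant monodromy be definable over $K$.
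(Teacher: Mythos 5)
This theorem is not proved in the paper: it is quoted from Bilu and Tichy \cite{BT} (their Theorem~10.5), specialized to the case $\deg F = \deg G$. The paper's ``proof'' is simply the citation, together with the observation that in the equal-degree case the statement simplifies. Your proposal, by contrast, attempts a from-scratch proof, which would amount to reconstructing a substantial portion of the Bilu--Tichy paper and its prerequisites (Fried's reducibility and monodromy results, Bilu's earlier work on quadratic factors of $f(x)-g(y)$, and a delicate genus analysis). That is very far beyond what the paper does or needs to do here.

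Taken on its own terms, your sketch is in the right spirit (Siegel's theorem forces a genus-zero component with at most two points at infinity, which one then classifies), but it has real gaps. The assertion that the monodromy groups of $F_1(X)-t$ and $G_1(X)-t$ must each be cyclic or dihedral is not what the argument gives; the classification comes from analyzing the genus and the number of places at infinity of \emph{factors of $F_1(X)-G_1(Y)$}, via Fried's results on when this polynomial is reducible, and this does not translate into a simple per-polynomial monodromy constraint. Likewise the step ``pulling out $E$ preserves the Siegel-admissible component'' is not automatic: the curve $F(X)=G(Y)$ maps to $E(U)=E(V)$ composed with $(F_1\circ a, G_1\circ b)$ in a way that changes the branch structure at infinity, and establishing that the relevant component descends requires the reducibility analysis you have elided. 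Finally, you do not explain why, under $\deg F=\deg G$, the general Bilu--Tichy list (which includes a fifth ``twisted power'' standard pair $(X^t,\, aX^r p(X)^t)$) collapses to exactly the four families in the statement; this degree-matching argument is short but must be carried out. Given that the paper treats Theorem~\ref{BTthm} as a black box, the appropriate move would have been to cite \cite{BT} and then verify the specialization to equal degrees, rather than reprove the full result.
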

Here $D_n(X,Y)$ is the unique polynomial in $\Z[X,Y]$ such that
$D_n(U+V,UV)=U^n+V^n$.
Note that, for $\alpha\in K$, the polynomial
$D_n(X,\alpha)\in K[X]$ is monic of degree $n$.  It follows at once from the
defining functional equation that $D_n(X,0)=X^n$ and, for
$\alpha\in\C$, we have $\alpha^n D_n(X,1) = D_n(\alpha X,\alpha^2)$.

We will not need arithmetic information about $F_1$ and $G_1$, but instead
only need their shape up to composition with linears over an extension
of $K$.

\begin{cor}
\label{BTcor}
Let $K,S,F,G$ satisfy the hypotheses of Theorem~\ref{BTthm}.  Then
$F=\hat E\circ H\circ \hat a$ and
$G=\hat E\circ \hat c\circ H\circ \hat b$ for some $\hat E\in\Kbar[X]$,
some linear
$\hat a,\hat b,\hat c\in\Kbar[X]$, and some
$H=D_n(X,\hat\alpha)$ with $\hat\alpha\in\{0,1\}$ and
$n\in\N$ satisfying $\cos(2\pi/n)\in K$.  In particular, for fixed $K$, there
are only finitely many possibilities for $H$ (even if we vary $S,F,G$).
\end{cor}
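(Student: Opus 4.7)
The plan is to prove Corollary~\ref{BTcor} by a case-by-case reduction, running through the four standard pairs listed in Theorem~\ref{BTthm} and showing that, after absorbing linear polynomials into $\hat E$, $\hat a$, $\hat b$, $\hat c$, the ``core'' can always be taken to be $D_n(X,0)=X^n$ or $D_n(X,1)$ for some suitable $n$.

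For pair~(1), write $H(X)=X=D_1(X,0)$; then $F=\hat E\circ H\circ \hat a$ and $G=\hat E\circ\hat c\circ H\circ\hat b$ trivially with $\hat c=X$. For pair~(2), take $H(X)=X^2=D_2(X,0)$ and absorb the linear $c$ into $\hat c$; note $\cos(2\pi/2)=-1\in K$. For pair~(3), use $D_2(X,\alpha)=X^2-2\alpha$, so $F_1(X)=X^2/\alpha-2$ is a linear polynomial in $X^2$ (and similarly for $G_1$). Taking $H=X^2$, the parameters $\alpha,\beta$ get absorbed: the composition $F=E\circ F_1\circ a$ becomes $(E\circ L_\alpha)\circ H\circ a$ for a linear $L_\alpha$, and likewise for $G$, so the discrepancy between the two normalizations $\alpha,\beta$ is encoded in a single linear $\hat c=L_\alpha^{-1}\circ L_\beta$.

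The interesting case is~(4). Here I would invoke the scaling identity
\[
D_n(\lambda X,\lambda^2)=\lambda^n D_n(X,1)
\]
valid for all $\lambda\in\overline K^*$, which follows at once from $D_n(U+V,UV)=U^n+V^n$ by substituting $\lambda U$, $\lambda V$. If $\alpha=0$, then $F_1=X^n=D_n(X,0)$ and $G_1=-\cos(\pi/n)^n X^n$ is $-\cos(\pi/n)^n$ times $H$, so the sign and scalar go into $\hat c$. If $\alpha\ne 0$, pick $\lambda\in\overline K$ with $\lambda^2=\alpha$; the identity gives $D_n(X,\alpha)=\lambda^n D_n(X/\lambda,1)$, expressing $F_1$ as linear $\circ\, H\circ$ linear with $H=D_n(X,1)$. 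The factor $\cos(\pi/n)$ inside $G_1$ simply modifies the inner linear map, and the outer sign modifies the outer one, yielding $G_1=L_1\circ H\circ L_2$ for linear $L_1,L_2$ over $\overline K$; subtracting off the $F$-contribution leaves a linear $\hat c$. In both subcases, $\cos(2\pi/n)\in K$ is inherited from the hypothesis of Theorem~\ref{BTthm}(4).

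For the ``in particular'' statement, observe that $\Q(\cos(2\pi/n))$ is the maximal totally real subfield of the cyclotomic field $\Q(\zeta_n)$, of degree $\varphi(n)/2$ over $\Q$ for $n\geq 3$. Hence $\cos(2\pi/n)\in K$ forces $\varphi(n)/2\leq [K:\Q]$, which bounds $n$; combined with the two choices $\hat\alpha\in\{0,1\}$, this leaves only finitely many $H$ once $K$ is fixed. The whole argument is essentially bookkeeping, so the main ``obstacle'' is really just recognizing the scaling identity for Dickson polynomials and being willing to pass to $\overline K$ to extract $\sqrt\alpha$ in case~(4); no substantive new ideas are needed.
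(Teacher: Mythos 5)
Your proposal is correct and follows essentially the same route as the paper's proof: a case-by-case reduction over the four standard pairs, using the scaling identity $D_n(\lambda X,\lambda^2)=\lambda^n D_n(X,1)$ in cases~(3) and~(4) to normalize the Dickson parameter to $0$ or $1$, and then bounding $n$ via $[\Q(\cos(2\pi/n)):\Q]=\phi(n)/2$. The only cosmetic difference is that in case~(3) you take $H=X^2=D_2(X,0)$ whereas the paper normalizes to $D_2(X,1)$, but since $D_2(X,1)=X^2-2$ is a linear shift of $X^2$ this is immaterial.
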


\begin{proof}
  We consider the four possibilities for $(F_1,G_1)$ in
  Theorem~\ref{BTthm}.  It suffices to
  show that in each case there is a polynomial $H$ of the desired form
  such that both $F_1$ and $G_1$ are gotten from $H$ by composing
  on both sides with linears over $\Kbar$.  This is
  clear in the first two cases (since $D_n(X,0)=X^n$).
  For the last two cases, note that if
  $\gamma\ne 0$ then $D_n(X,\gamma^2)=\gamma^n D_n(X/\gamma,1)$.
  Thus, in the third case, $F_1$ and $G_1$ are gotten from $D_2(X,1)$
  by composing with linears.  And in the fourth case, $F_1$ and $G_1$
  are gotten from $D_n(X,\hat\alpha)$ by composing with linears, where
  $\hat\alpha=1$ if $\alpha\ne 0$ (and $\hat\alpha=0$ otherwise).

Finally, if $\cos(2\pi/n)\in K$ then
$[K:\Q]\ge[\Q(\cos(2\pi/n)):\Q]$; the latter degree equals $\phi(n)/2$ if $n>2$.
Since only finitely many $n$ satisfy
$\phi(n)\le 2[K:\Q]$, there are only finitely many
possibilities for $H$.
\end{proof}

\begin{comment}
\begin{remark}
Assertion (9) in the paper \cite{BT} is not true for odd $n$.
However, the only essential use of (9) in the proof of
\cite[Thm. 10.5]{BT} is in the proof of \cite[Thm. 8.2]{BT},
which is done in \cite{B}.  Step 6 of that proof uses (9), although
one can rearrange the proof to avoid this usage.
\end{remark}
\end{comment}

%#######################################################################

\subsection{Polynomial decomposition}

Our application of Theorem~\ref{BTthm} relies on results about polynomial
decomposition.  The fundamental results in this topic were proved by Ritt
in the 1920's \cite{Ritt}; for more recent developments, see \cite{MZ,Schinzel}.
Specifically, we will use the following simple but surprising result which
shows a type of `rigidity' of polynomial decomposition.

\begin{lemma}\label{uniq}
Let $K$ be a field of characteristic zero.
If $A,B,C,D\in K[X]\setminus K$ satisfy $A\circ B=C\circ D$ and
$\deg(B)=\deg(D)$, then there is a linear $\ell\in K[X]$ such that
$A=C\circ \ell^{-1}$ and $B=\ell\circ D$.
\end{lemma}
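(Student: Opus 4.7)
The plan is to exploit the gauge freedom $A \circ B = (A \circ \mu) \circ (\mu^{-1} \circ B)$, valid for any linear $\mu \in K[X]$, in order to reduce to a normalized version of the equation, and then to use a simple degree bound to finish. Concretely, we choose linear polynomials $\mu, \nu \in K[X]$ so that after replacing $(A, B)$ by $(A \circ \mu, \mu^{-1} \circ B)$ and $(C, D)$ by $(C \circ \nu, \nu^{-1} \circ D)$, the new $B$ and $D$ are both monic of the common degree $n := \deg(B) = \deg(D)$ and vanish at $0$; the two free parameters of a linear polynomial over $K$ allow us to normalize the leading and constant coefficients of $B$ simultaneously, and similarly for $D$. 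The lemma will then follow once we prove that, in this normalized setting, $B = D$ (and therefore $A = C$), since unwinding the normalization produces a linear $\ell := \mu \circ \nu^{-1}$ with $B = \ell \circ D$ and $A = C \circ \ell^{-1}$.

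To establish $B = D$ in the normalized setting, write $m := \deg(A) = \deg(C)$, and let $a$ denote the common leading coefficient of $A$ and $C$ (equal by comparing leading coefficients of $A(B)$ and $C(D)$). Rearranging $A(B) = C(D)$ gives
\[
a\bigl(B^m - D^m\bigr) \;=\; \bigl(C(D) - a D^m\bigr) - \bigl(A(B) - a B^m\bigr),
\]
whose right-hand side has degree at most $(m-1)n$, because each term $a_j B^j$ with $j < m$ has degree $jn \le (m-1)n$, and similarly on the $D$-side. On the left we factor
\[
B^m - D^m \;=\; (B - D)\bigl(B^{m-1} + B^{m-2}D + \cdots + D^{m-1}\bigr).
\]
Each of the $m$ summands of the second factor is monic of degree $(m-1)n$, so that factor has degree exactly $(m-1)n$ with leading coefficient $m$, which is nonzero because $K$ has characteristic zero. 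Since $B$ and $D$ are both monic of degree $n$, we have $\deg(B - D) \le n - 1$; comparing degrees in the identity above then forces $\deg(B - D) \le 0$, i.e.\ $B - D$ is a constant, and the normalization $B(0) = D(0) = 0$ upgrades this to $B = D$. Substituting into $A(B) = C(B)$, and using that the non-constant polynomial $B$ takes infinitely many values (as $K$ is infinite in characteristic zero), yields $A = C$.

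The only real care needed is to verify that $m \ne 0$ in $K$, which is automatic in characteristic zero, and to keep the degree bookkeeping tight; there is no serious obstacle. The key structural point is that the gap of $n$ between consecutive powers $B^j$ and $B^{j+1}$ in the expansion of $A(B)$ is precisely what prevents the lower-order coefficients of $A$ from affecting the top $n$ coefficients of $A(B)$, and this is what makes the bound $\deg\bigl(a(B^m - D^m)\bigr) \le (m-1)n$ hold.
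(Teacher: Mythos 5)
Your proof is correct. It shares with the paper's argument the same opening move — normalize to the case where the ``inner'' polynomials are monic with zero constant term by absorbing linears — but the way you establish uniqueness after that is genuinely different. The paper proves that the decomposition $F=\tilde A\circ\tilde B$ (with $\tilde B$ monic, no constant term, of a fixed degree) is unique by a top-down coefficient-matching argument: equate the coefficients of $F$ at degrees $nm-1,\dots,nm-m+1$ to determine $\tilde B$, then at degrees $nm-m,\dots,0$ to determine $\tilde A$. You instead normalize both $B$ and $D$ simultaneously and prove $B=D$ in one shot, by factoring $B^m-D^m=(B-D)\bigl(B^{m-1}+\cdots+D^{m-1}\bigr)$, observing that the second factor has degree exactly $(m-1)n$ with leading coefficient $m\ne 0$, and comparing against the degree bound $(m-1)n$ for the remaining terms of $A(B)-C(D)$. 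Your route is a bit slicker and isolates the structural reason for uniqueness — the ``gap of $n$'' between consecutive powers of a degree-$n$ polynomial — while the paper's coefficient-matching is more explicitly algorithmic. Both essentially need characteristic zero (or at least $m\not\equiv 0$) at the same place: you need $m\ne 0$ so the second factor has the full degree $(m-1)n$, and the paper's coefficient recursion needs to divide by $m$ (times the leading coefficient) at each step. One tiny stylistic point: the final step $A=C$ follows most cleanly from a degree count — if $A\ne C$ then $(A-C)\circ B$ would be a nonzero polynomial of degree $\deg(A-C)\cdot n\ge 0$ — rather than the ``infinitely many values'' argument, though both work over an infinite field.
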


\begin{proof}
Write $F=A\circ B\, (=C\circ D)$.
Pick a linear $v\in K[X]$ such that $\hat B:=v\circ B$ is monic
and has no constant term.  Then $F=\hat A\circ\hat B$, where
 $\hat A=A\circ v^{-1}$.
We will show that there are unique $\tilde A,\tilde B\in K[X]$ such
 that
$F=\tilde A\circ \tilde B$ and $\deg(\tilde B)=\deg(B)$ and $\tilde B$ is monic
 with
no constant term.
Thus $A=\tilde A\circ v$ and $B=v^{-1}\circ \tilde B$.  Since we could have done
 the same
thing with $C$ and $D$ in place of $A$ and $B$, the result follows.

Let $m$ be the degree of $B$, and say the leading term of $F$ is
 $\alpha X^{nm}$;
then the leading term of $\tilde A$ is $\alpha X^n$.  Now consider the identity
$F=\tilde A\circ \tilde B$
 and equate terms of degrees $nm-1,nm-2,\dots,
nm-m+1$ to uniquely determine, in order, the terms of $\tilde B$ of degrees
$m-1,m-2,\dots,1$.  Then consider terms of $F$ of degrees $nm-m,
nm-2m,\dots,0$ to determine the terms of $\tilde A$ of degrees
$n-1,n-2,\dots,0$.
\end{proof}

\begin{remark}
This lemma was first proved by Ritt \cite{Ritt} in the case $K=\C$
(using Riemann surface techniques); the proof above is due to Levi \cite{Levi}.
\end{remark}

%#######################################################################

\subsection{Linear relations of polynomials}

The following lemma shows when a polynomial can be gotten from itself
by composing with linears.

\begin{lemma}\label{Halblem}
Let $K$ be a field of characteristic zero.
If $F\in K[X]$ has degree $d>1$, and $a,b\in K[X]$ are linears
such that $a\circ F = F\circ b$, then there exist $\alpha,\beta\in K$,
integers $r,s\ge 0$, an element $\gamma\in K^*$ with $\gamma^s=1$, and
a polynomial $\hat F\in X^r K[X^s]$ such that $a=-\alpha + \gamma^r (X+\alpha)$,
$F=-\alpha+\hat F(X-\beta)$, and $b=\beta+ \gamma(X-\beta)$.
Specifically, if the coefficients of $X^d$ and $X^{d-1}$ in $F$ are $\theta_d$
and $\theta_{d-1}$, we can take $\beta=-\theta_{d-1}/(d\theta_d)$ and
 $\alpha=-F(\beta)$.
\end{lemma}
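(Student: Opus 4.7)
The plan is to reduce to a normal form by absorbing the translational freedom in the equation $a\circ F = F\circ b$. Set $\beta = -\theta_{d-1}/(d\theta_d)$ and $\alpha = -F(\beta)$, and define $\hat F(X) = F(X+\beta)+\alpha$, so that $F(X)=\hat F(X-\beta)-\alpha$. By the choice of $\beta$, the $X^{d-1}$ coefficient of $\hat F$ vanishes, and $\hat F(0)=F(\beta)+\alpha = 0$, so $\hat F$ has no constant term. This gives the decomposition $F = U\circ\hat F\circ T^{-1}$ with $T(X)=X+\beta$ and $U(X)=X-\alpha$, which is exactly the form $F = -\alpha + \hat F(X-\beta)$ required in the conclusion.

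Substituting into $a\circ F = F\circ b$ and conjugating yields $\tilde a\circ\hat F = \hat F\circ\tilde b$, where $\tilde a := U^{-1}\circ a\circ U$ and $\tilde b := T^{-1}\circ b\circ T$. Writing $\tilde a(X)=\mu X+\nu$ and $\tilde b(X)=\gamma X+\delta$, I now read off coefficients of $\tilde a(\hat F(X)) = \mu\hat F(X)+\nu$ and $\hat F(\tilde b(X)) = \hat F(\gamma X+\delta)$. The $X^d$ coefficient gives $\mu=\gamma^d$. The $X^{d-1}$ coefficient on the left is zero since $\hat F$ has no $X^{d-1}$ term; on the right, the only contribution comes from expanding the leading monomial of $\hat F$, yielding $d\theta_d\gamma^{d-1}\delta$, which forces $\delta = 0$ (using $\operatorname{char}(K)=0$ and $\gamma\ne 0$). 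Finally, the constant term on the right is $\hat F(0)=0$, so $\nu = \tilde a(\hat F(0)) = 0$. Thus $\tilde b(X)=\gamma X$ and $\tilde a(X)=\gamma^d X$, which, after conjugating back by $T$ and $U$, give the stated forms of $b$ and $a$ (with exponent $d$ in place of $r$, to be reconciled in the next step).

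The relation now reads $\gamma^d \hat F(X)=\hat F(\gamma X)$. Writing $\hat F(X)=\sum_{i}c_iX^i$, this forces $\gamma^{d-i}=1$ for every $i$ with $c_i\ne 0$. Let $r$ be the smallest such $i$ (note $r\ge 1$ since $c_0=0$), and let $s$ be the multiplicative order of $\gamma$ in $K^*$ — taking $s=0$ when $\gamma$ has infinite order, in which case $\hat F$ must be the single monomial $c_d X^d$ (so $r=d$) and $X^r K[X^s] = X^r K$ still accommodates it. In all cases $\gamma^s=1$ and every exponent of $\hat F$ is congruent to $r$ modulo $s$, so $\hat F\in X^r K[X^s]$. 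Since $\gamma^{d-r}=1$ gives $\gamma^d=\gamma^r$, the formula $a(X)=-\alpha+\gamma^d(X+\alpha)$ extracted above is the same as $a(X)=-\alpha+\gamma^r(X+\alpha)$, completing the proof.

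I do not anticipate a serious obstacle here; the only subtlety is keeping track of signs and directions in the conjugation, and handling the corner case where $\gamma$ has infinite order (forcing $\hat F$ to be a monomial and $s=0$). Everything else is a linear coefficient comparison relying crucially on the simultaneous vanishing of the $X^{d-1}$ and constant coefficients of $\hat F$.
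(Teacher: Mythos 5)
Your proof is correct and follows essentially the same route as the paper's: normalize $F$ to $\hat F$ with no $X^{d-1}$ or constant term, conjugate $a$ and $b$ accordingly, use the vanishing $X^{d-1}$ and constant coefficients to kill the translation parts, and then read off the monomial-spacing structure of $\hat F$ from $\gamma^d\hat F(X)=\hat F(\gamma X)$. The only cosmetic difference is your choice of $s$ as the multiplicative order of $\gamma$ (with the $s=0$ convention), whereas the paper takes $s$ to be the gcd of the differences of exponents of $\hat F$; both choices satisfy $\gamma^s=1$ and $\hat F\in X^rK[X^s]$, so the conclusions agree.
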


\begin{proof}
Putting $\beta=-\theta_{d-1}/(d\theta_d)$ and $\alpha=-F(\beta)$, we see that
$\hat F:=\alpha+ F(X+\beta)$
has no terms of degree $d-1$ or $0$.  We rewrite $a\circ F=F\circ b$ as
$\hat a\circ\hat F = \hat F\circ \hat b$, where
$\hat a:=\alpha+ a(X-\alpha)$ and
$\hat b:=-\beta+b(X+\beta)$.  Since $\hat F$ has no term of
degree $d-1$, also $\hat a\circ\hat F$ (and hence $\hat F\circ\hat b$)
has no such term, so $\hat b$ cannot have
a term of degree $0$.  Then $\hat F\circ\hat b$ has no term of degree $0$,
so also $\hat a$ has no term of degree $0$.  Writing $\hat a=\delta X$ and
$\hat b=\gamma X$, we have $\delta \hat F(X) = \hat F(\gamma X)$.
Writing $\hat F=\sum \hat\theta_i X^i$,
it follows that $\delta\hat\theta_i=\hat\theta_i \gamma^i$, so
$\delta=\gamma^i$ for every $i$ such that $\hat\theta_i\ne 0$.
If $\hat F$ has terms of distinct degrees $i$ and $j$, then $\gamma^{i-j}=1$;
letting $s$ be the greatest common divisor of the set of differences between
degrees of two terms of $\hat F$, it follows that $\gamma^s=1$, and further 
$\hat F\in X^r K[X^s]$ for some $r\ge 0$ such that $\delta=\gamma^r$.
If $\hat F(X)=\hat\theta_d X^d$
then we take $s=0$ and $r=d$, so again $\delta=\gamma^r$ and $\gamma^s=1$ and
$\hat F\in X^r K[X^s]$.  The result follows.
\end{proof}

\begin{remark}
The first reference we know for this result is \cite{Halbpol} (for $K=\C$).
\end{remark}

%#######################################################################
%#######################################################################

\section{The number field case}
\label{proof main}

In this section we prove the number field version of Theorem~\ref{complex}.
Our proof relies on Proposition~\ref{linearprop}, which will be proved
in the next section.
We begin with two lemmas applying the results of the previous section
to the present context.

\begin{lemma}\label{2to1}
Let $K$ be a field of characteristic zero.  Suppose
$F,H,E,\tilde E\in K[X]\setminus K$ and linear $a,b,c,\tilde a,\tilde b,
\tilde c\in K[X]$ satisfy
\begin{align*}
F&=E\circ H\circ a \\
G&=E\circ c\circ H\circ b \\
F^t&=\tilde E\circ H\circ\tilde a \\
G^t&=\tilde E\circ\tilde c\circ H\circ\tilde b
\end{align*}
for some integer $t>1$.  Then there is a linear $e\in K[X]$ such that
$F^{t-1}=G^{t-1}\circ e$.
\end{lemma}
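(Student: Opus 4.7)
The plan is to apply Lemma~\ref{uniq} three times, exploiting the two ways of writing $F^t$ (respectively $G^t$) as a composition whose inner factor is $H$ up to a linear change of variable, and then comparing the resulting expressions for $\tilde E$.

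First, from $F^t = F^{t-1}\circ F = F^{t-1}\circ E\circ H\circ a$ and the hypothesis $F^t = \tilde E\circ H\circ\tilde a$, I would obtain
\[
(F^{t-1}\circ E)\circ (H\circ a) \;=\; \tilde E\circ (H\circ\tilde a).
\]
The two inner factors have the same degree $\deg H$, so Lemma~\ref{uniq} produces a linear $\ell\in K[X]$ with
\[
F^{t-1}\circ E \;=\; \tilde E\circ \ell^{-1}.
\]
Repeating the manoeuvre with $G^t = G^{t-1}\circ G = G^{t-1}\circ E\circ c\circ H\circ b$ and $G^t = \tilde E\circ\tilde c\circ H\circ\tilde b$ gives, for some linear $m\in K[X]$,
\[
G^{t-1}\circ E\circ c \;=\; \tilde E\circ\tilde c\circ m^{-1}.
\]

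To finish, I would eliminate $\tilde E$ between these two identities: solving the first for $\tilde E = F^{t-1}\circ E\circ \ell$ and substituting into the second yields
\[
G^{t-1}\circ(E\circ c) \;=\; F^{t-1}\circ\bigl(E\circ\ell\circ\tilde c\circ m^{-1}\bigr).
\]
The two inner factors here are both $E$ precomposed with a linear, hence have common degree $\deg E$, while the two outer factors $G^{t-1}$ and $F^{t-1}$ share the common degree $(\deg F)^{t-1}$. A third application of Lemma~\ref{uniq} therefore delivers a linear $e\in K[X]$ with $F^{t-1} = G^{t-1}\circ e$, which is the desired conclusion. The only point requiring attention is verifying at each stage that no factor entering the relevant decomposition is constant, so that Lemma~\ref{uniq} is applicable; this follows immediately from $t>1$ together with the hypothesis that $F, G, E, H, \tilde E \in K[X]\setminus K$, and I expect this bookkeeping, rather than any deeper idea, to be the only potential obstacle.
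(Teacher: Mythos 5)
Your proof is correct and follows essentially the same route as the paper: three applications of Lemma~\ref{uniq}, first to each of the two decompositions of $F^t$ and $G^t$ to express $\tilde E$ in terms of $F^{t-1}\circ E$ and $G^{t-1}\circ E$ (up to linears), and then once more after eliminating $\tilde E$. The only cosmetic difference is that in the second application you keep $c$ and $\tilde c$ with the outer factors rather than bundling them into the inner factor $H\circ b$; both choices are valid since only the inner degrees need to agree.
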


\begin{proof}
We have
\begin{align*}
F^{t-1}\circ E\circ H\circ a &= F^t = \tilde E\circ H\circ\tilde a
 \quad\text{and} \\
G^{t-1}\circ E\circ c\circ H\circ b &= G^t = \tilde E\circ\tilde c\circ H\circ
\tilde b.
\end{align*}
By Lemma~\ref{uniq}, there are linears $\ell_1,\ell_2\in K[X]$ such that
\begin{align*}
H\circ a  &= \ell_1 \circ H\circ\tilde a \quad\text{and} \\
c\circ H \circ b  &= \ell_2 \circ\tilde c\circ H\circ\tilde b.
\end{align*}
Thus
\[
F^{t-1}\circ E\circ\ell_1 = \tilde E =
G^{t-1}\circ E\circ\ell_2.
\]
Again using Lemma~\ref{uniq}, there is therefore a linear $e\in K[X]$
such that
\[
F^{t-1} = G^{t-1}\circ e,
\]
as desired.
\end{proof}

\begin{lemma}\label{reduction}
Let $K$ be a number field, $S$ a finite set of nonarchimedean places of $K$,
and $f,g\in K[X]$ with $\deg(f)=\deg(g)>1$.
Suppose that, for every $k\in\N$, the equation $f^k(X)=g^k(Y)$ has infinitely
many solutions in the ring of $S$-integers of $K$.
Then there exists $r\in\N$ such that, for both $n=1$ and infinitely many other
values $n\in\N$, there is a linear $\ell_n\in \Kbar[X]$ such that
$f^{rn}=g^{rn}\circ\ell_n$.
\end{lemma}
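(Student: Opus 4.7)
The plan is to combine Corollary~\ref{BTcor}, Lemma~\ref{uniq}, and Lemma~\ref{Halblem} in a two-level pigeonhole argument; this will in fact yield the stronger conclusion that $f^r = g^r$ for some $r \in \N$, so the lemma follows with $\ell_n(X) = X$ for every $n$. For each $k \in \N$, the hypothesis together with $\deg(f^k) = \deg(g^k) > 1$ allows the application of Corollary~\ref{BTcor} to $(f^k, g^k)$, giving
\[
f^k = \hat E_k \circ H_k \circ \hat a_k, \qquad g^k = \hat E_k \circ \hat c_k \circ H_k \circ \hat b_k,
\]
with $H_k$ drawn from a finite family depending only on $K$. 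A first pigeonhole produces an infinite $K_1 \subseteq \N$ on which $H_k$ equals a single polynomial $H$ of degree $h$; since $\deg(\hat E_k) = \deg(f)^k/h$ equals $1$ for at most one value of $k$, I would choose $k_0 \in K_1$ with $\deg(\hat E_{k_0}) > 1$.

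For each $k \in K_1$ with $k > k_0$, I would rewrite $f^k = f^{k-k_0} \circ \hat E_{k_0} \circ H \circ \hat a_{k_0}$, compare with $f^k = \hat E_k \circ H \circ \hat a_k$, and apply Lemma~\ref{uniq} to extract a linear $\lambda_k \in \Kbar[X]$ with $\hat E_k = f^{k-k_0} \circ \hat E_{k_0} \circ \lambda_k$. The parallel computation for $g^k$ produces (after absorbing $\hat c_{k_0}$ and $\hat c_k^{-1}$) a linear $\tau_k$ with $\hat E_k = g^{k-k_0} \circ \hat E_{k_0} \circ \tau_k$. Equating the two expressions for $\hat E_k$ and invoking Lemma~\ref{uniq} once more, now with the right factors $\hat E_{k_0} \circ \lambda_k$ and $\hat E_{k_0} \circ \tau_k$ of common degree $\deg(\hat E_{k_0})$, I obtain a linear $\xi_k \in \Kbar[X]$ satisfying
\[
f^{k-k_0} = g^{k-k_0} \circ \xi_k^{-1} \quad\text{and}\quad \xi_k \circ \hat E_{k_0} = \hat E_{k_0} \circ (\lambda_k \circ \tau_k^{-1}).
\]

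Because $\deg(\hat E_{k_0}) > 1$, Lemma~\ref{Halblem} applied to $\hat E_{k_0}$ restricts $\xi_k$ to a finite set of linears, so a second pigeonhole produces an infinite $K_2 \subseteq K_1$ on which $\xi_k$ equals a fixed linear $\xi_0$. For any $k_1 < k_2$ in $K_2$, substituting $f^{k_1-k_0} = g^{k_1-k_0} \circ \xi_0^{-1}$ into $f^{k_2-k_0} = f^{k_2-k_1} \circ f^{k_1-k_0}$ and rewriting $f^{k_2-k_0} = g^{k_2-k_0} \circ \xi_0^{-1} = g^{k_2-k_1} \circ g^{k_1-k_0} \circ \xi_0^{-1}$ yields an identity whose shared right factor $g^{k_1-k_0} \circ \xi_0^{-1}$ is nonconstant, hence surjective on $\Kbar$; cancelling gives $f^{k_2-k_1} = g^{k_2-k_1}$. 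Taking $r = k_2 - k_1$ then furnishes $f^{rn} = g^{rn}$ for every $n \in \N$, as required.

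The main obstacle is the careful bookkeeping in the two successive applications of Lemma~\ref{uniq} and the identification of the correct commutation relation satisfied by $\xi_k$; the crucial requirement $\deg(\hat E_{k_0}) > 1$ is needed to make Lemma~\ref{Halblem} applicable so that the second pigeonhole can restrict $\xi_k$ to finitely many values.
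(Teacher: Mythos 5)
Your first paragraph is sound and is essentially a finer-grained version of the paper's argument: instead of passing to iterates $f^{2^k}$, you compare the Bilu--Tichy decompositions of $f^{k_0}$ and $f^k$ directly, and two applications of Lemma~\ref{uniq} yield $f^{k-k_0}=g^{k-k_0}\circ\xi_k^{-1}$ together with the commutation relation $\xi_k\circ\hat E_{k_0}=\hat E_{k_0}\circ(\lambda_k\circ\tau_k^{-1})$. That much is correct, and it already recovers (in fact strengthens) the first paragraph of the paper's proof.

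The gap is in the final step. You assert that since $\deg(\hat E_{k_0})>1$, Lemma~\ref{Halblem} ``restricts $\xi_k$ to a finite set of linears.'' Lemma~\ref{Halblem} does not say that. It gives a \emph{structural form} for $\xi_k$ (namely $\xi_k=-\alpha+\gamma^r(X+\alpha)$ with $\gamma^s=1$, where $\alpha,\beta,r,s$ are determined by $\hat E_{k_0}$), but the set of admissible $\gamma$ is finite only when $s>0$, i.e., only when the normalized polynomial $\hat F=\alpha+\hat E_{k_0}(X+\beta)$ has at least two monomial terms. If $\hat E_{k_0}$ is a shifted monomial --- that is, $u\circ X^d\circ v$ for linears $u,v$ --- then $s=0$, $\gamma$ ranges over all of $\Kbar^*$, and there are infinitely many linears $\xi$ with $\xi\circ\hat E_{k_0}=\hat E_{k_0}\circ(\text{linear})$. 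This is precisely why the paper's Lemma~\ref{key claim} carries the extra hypothesis that $u\circ F\circ v$ always has at least two monomial terms, and why the monomial case is handled separately in Lemma~\ref{another one} and Proposition~\ref{linearprop}. So your second pigeonhole can fail, and with it the conclusion $f^{k_2-k_1}=g^{k_2-k_1}$.

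Note also that your target is stronger than the lemma's: the lemma only asks for $f^{rn}=g^{rn}\circ\ell_n$, not $f^{rn}=g^{rn}$, and the paper deliberately settles for this weaker form precisely because the monomial case obstructs the exact identity at this stage. The cleanest repair of your argument is to keep your first paragraph (which already yields, for some $r$, a linear $\ell$ with $f^r=g^r\circ\ell$) and then obtain infinitely many $n$ by the paper's contradiction trick: assume only finitely many $n$ work, take $N$ larger than all of them, and rerun the first paragraph with $(f^{rN},g^{rN})$ in place of $(f,g)$; this produces some $n=Nr'\ge N$ that works, a contradiction. This sidesteps any need to control $\xi_k$.
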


\begin{proof}
First we show that there exists $r\in\N$ such that
$f^r=g^r\circ\ell$ for some linear $\ell\in\Kbar[X]$.
By Corollary~\ref{BTcor}, for each $k$ we have
$f^{2^k}=E_k\circ H_k\circ a_k$ and $g^{2^k}=E_k\circ c_k\circ
H_k\circ b_k$ with $E_k\in\Kbar[X]$, linear
$a_k,b_k,c_k\in\Kbar[X]$, and some $H_k\in\Kbar[X]$
which comes from a finite set of polynomials.  Thus, $H_k=H_s$ for
some $k$ and $s$ with $k<s$.  Applying Lemma~\ref{2to1} with
$F=f^{2^k}$ and $G=g^{2^k}$ and $t=2^{s-k}$, it follows that
there is a linear $\ell\in\Kbar[X]$ such that $F^{t-1}=G^{t-1}\circ\ell$,
whence $f^r=g^r\circ\ell$ for $r=2^s-2^k$.

Suppose there are only finitely many $n\in\N$ for which there is a linear
$\ell_n\in\Kbar[X]$ with $f^{rn}=g^{rn}\circ\ell_n$.  Let $N$ be an integer
exceeding each of these finitely many integers $n$.  We get a contradiction by
applying the previous paragraph with $(f^{rN},g^{rN})$ in place of $(f,g)$.
\end{proof}

In the next section we will prove the following proposition.

\begin{prop}\label{linearprop}
Let $K$ be a field of characteristic zero, and let $F,\ell\in K[X]$ satisfy
$\deg(F) = d>1=\deg(\ell)$.  Suppose that, for infinitely many $n>0$, there is a
linear $\ell_n\in K[X]$ such that $F^n=(F\circ\ell)^n\circ\ell_n$.  Then either
\begin{enumerate}
\item $F^k = (F\circ\ell)^k$ for some $k\in\N$; or
\item $F=v^{-1} \circ \epsilon X^d \circ v$ and
$\ell=v^{-1}\circ\delta X\circ v$ for some linear $v\in K[X]$ and some
$\epsilon,\delta\in K^*$.
\end{enumerate}
\end{prop}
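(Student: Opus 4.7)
Set $g := F \circ \ell$. The plan is to first exploit Lemma~\ref{uniq} to strengthen the hypothesis, then combine Lemmas~\ref{uniq} and~\ref{Halblem} to extract a normal form for $F$, and finally split into cases by the shape of that normal form. Given $F^n = g^n \circ \ell_n$, write both sides as $F^{n-1} \circ F = g^{n-1} \circ (F \circ \ell \circ \ell_n)$, so outer factors have degree $d^{n-1}$ and inner factors have degree $d$. Lemma~\ref{uniq} then produces a linear $\lambda_n \in K[X]$ with $F^{n-1} = g^{n-1} \circ \lambda_n^{-1}$ and
\[
\lambda_n^{-1} \circ F = F \circ (\ell \circ \ell_n).
\]
The first identity transports the hypothesis from $n$ down to $n-1$, so by downward induction starting from arbitrarily large $n$ in the infinite set, we may assume $F^n = g^n \circ \ell_n$ for every $n \geq 1$. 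The second identity has the shape $a \circ F = F \circ b$, so Lemma~\ref{Halblem} supplies $\alpha, \beta, r, s$ and a polynomial $\hat F \in X^r K[X^s]$ (depending only on $F$) such that $F = -\alpha + \hat F(X - \beta)$ and $\ell \circ \ell_n = \beta + \gamma_n(X - \beta)$ with $\gamma_n^s = 1$.

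If $s \geq 1$, then $\gamma_n$ ranges over the finite set of $s$-th roots of unity in $K$, so $\ell_n$ takes only finitely many values. Pigeonholing yields a linear $\ell_*$ with $\ell_n = \ell_*$ for infinitely many $n$; for any two such indices $n_1 < n_2$,
\[
F^{n_2 - n_1} \circ F^{n_1} = F^{n_2} = g^{n_2} \circ \ell_* = g^{n_2 - n_1} \circ (g^{n_1} \circ \ell_*) = g^{n_2 - n_1} \circ F^{n_1},
\]
and Lemma~\ref{uniq} (equal outer degrees, identical inner factor) forces $F^{n_2 - n_1} = g^{n_2 - n_1}$, which is conclusion~(1).

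If $s = 0$, then $\hat F(X) = \theta X^d$ is a monomial for some $\theta \in K^*$, so $F = v^{-1} \circ P \circ v$ with $v(X) = X - \beta$ and $P(X) = \theta X^d - c$, where $c := \alpha + \beta$. Writing $\tilde\ell := v \ell v^{-1} = uX + w$ and $\tilde g := v g v^{-1} = P \circ \tilde\ell$, the hypothesis becomes $P^n = \tilde g^n \circ \tilde\ell_n$ for every $n$. When $c = 0$, $F$ already has the shape of conclusion~(2), and to obtain the analogous shape for $\ell$ I would rule out $w \neq 0$: the polynomial $\tilde g(X) = \theta(uX + w)^d$ has unique critical point $-w/u$ with $\tilde g(-w/u) = 0 \neq -w/u$, so the $d$ distinct points in $\tilde g^{-1}(-w/u)$ yield for $\tilde g^2$ at least $d + 1$ distinct critical points, contradicting that $P^2 = \tilde g^2 \circ \tilde\ell_2$ has a unique critical point. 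When $c \neq 0$, the target is conclusion~(1) via $\tilde g = P$: matching multisets of critical values of $P^n$ and $\tilde g^n$ level by level (each critical value $P^{j-1}(-c)$ of $P^n$ appears with multiplicity $(d-1)d^{n-j}$, and likewise $\tilde g^{j-1}(-c)$ for $\tilde g^n$) forces $\tilde g^k(-c) = P^k(-c)$ for every $k \geq 0$, so $\tilde g$ and $P$ agree on the forward $P$-orbit of $-c$.

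The delicate step---and the expected main obstacle---is upgrading this orbit agreement to $\tilde g = P$. Since $\tilde g = P \circ \tilde\ell$, the condition $\tilde g(y) = P(y)$ at an orbit point $y$ reads $\tilde\ell(y) = \zeta_y\, y$ for some $d$-th root of unity $\zeta_y$. When the $P$-orbit of $-c$ is infinite or contains at least $d + 1$ distinct elements, polynomial interpolation applied to $\tilde g - P$ immediately gives $\tilde g = P$; the delicate case is when $-c$ is strictly pre-periodic with a very short orbit, where one must combine the small system $\{uy + w = \zeta_y y\}$ with the rigid shape $\tilde g = P \circ \tilde\ell$ and the hypothesis $P^n = \tilde g^n \circ \tilde\ell_n$ at suitable $n$ to force $w = 0$ and $u^d = 1$ (hence $\tilde g = P$), completing the proof.
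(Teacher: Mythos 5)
Your proposal follows the paper for its first half, then diverges to a different strategy and develops a genuine gap. The downward induction (using Lemma~\ref{uniq} to show $F^n = g^n\circ\ell_n$ for \emph{all} $n\geq 1$, not just infinitely many) is a clean strengthening not in the paper, and your extraction of $\lambda_n^{-1}\circ F = F\circ(\ell\circ\ell_n)$ followed by Lemma~\ref{Halblem} is exactly the paper's first case. Your handling of $s\geq 1$ by pigeonholing the finitely many $\gamma_n$ is correct and matches the paper's use of Lemma~\ref{key claim}. Your subcase $s=0$, $c=0$ (counting distinct critical points of $\tilde g^2$ against the single critical point of the pure power $P^2$) is also a correct, self-contained alternative to part of the paper's Lemma~\ref{another one}.

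The gap is in $s=0$, $c\neq 0$. Matching critical-value multiplicities does correctly yield $\tilde g^j(-c) = P^j(-c)$ for all $j$ (the multiplicities $(d-1)\sum d^{n-1-j}$ are sums of distinct powers of $d$, so base-$d$ uniqueness forces the orbit agreement even when the orbit repeats); but when the forward $P$-orbit of $-c$ has fewer than $d+1$ distinct points, these constraints do \emph{not} determine the degree-$d$ polynomial $\tilde g$ by interpolation, and your final paragraph acknowledges this without resolving it. This is not an exotic corner case: $P(X)=\theta X^d - c$ with $\theta(-c)^d = c$ gives the two-point orbit $-c\to 0\to -c$ (for example $P(X)=X^2-1$ with $d=2$, or $P(X)=X^3-i$ with $d=3$), which falls squarely inside the unhandled region. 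The paper avoids the whole issue by not passing to arbitrary iterates at all: it applies Lemma~\ref{uniq} a second time at the level of $F^2$ to get $F\circ\ell\circ F\circ(\ell\circ\ell_n) = v_n\circ F\circ F$, and Lemma~\ref{another one} shows this single two-step commutation equation has only finitely many linear solutions $(a,b)$ unless conclusion (2) holds; finiteness of $\{\ell_n\}$ then yields conclusion (1) by the same pigeonhole you used in the $s\geq 1$ case. To complete your argument you would need either a statement of the strength of Lemma~\ref{another one}, or a genuine analysis of the system $\{uy+w=\zeta_y y\}$ over the short orbit combined with the recursion $P(y_k)=y_{k+1}$, neither of which is carried out.
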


We now show that this result implies the number field version of
Theorem~\ref{complex}.  Specifically, we prove the following.  

\begin{thm}
\label{number fields}
Let $K$ be a number field, let $x_0,y_0\in K$, and let $f,g\in K[X]$
satisfy $\deg(f)=\deg(g)>1$.  If $\OO_f(x_0)\cap\OO_g(y_0)$ is infinite,
then $f^k = g^k$ for some $k\in\N$.
\end{thm}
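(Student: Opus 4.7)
The plan is to promote the orbit-intersection hypothesis into an identity $f^r=g^r\circ\ell$ with $\ell$ linear (via Lemma~\ref{reduction}), then invoke Proposition~\ref{linearprop} and finish with a place-by-place analysis of the resulting monomial maps.

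First I would upgrade the hypothesis. From the infinitely many pairs $(m,n)$ with $f^m(x_0)=g^n(y_0)$: if either orbit were preperiodic the intersection would already be finite, so $x_0$ and $y_0$ have strictly positive canonical heights $\hat h_f(x_0)$ and $\hat h_g(y_0)$. Since $h(f^m(x_0))=d^m\hat h_f(x_0)+O(1)$ and $h(g^n(y_0))=d^n\hat h_g(y_0)+O(1)$ with $d:=\deg f=\deg g>1$, equality of heights forces $m-n$ to lie in a bounded set. Pigeonholing and replacing $y_0$ by $g^c(y_0)$ for an appropriate $c\ge 0$ (or, symmetrically, $x_0$ by $f^{|c|}(x_0)$ when $c<0$), I may assume $f^m(x_0)=g^m(y_0)$ for infinitely many $m$. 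After enlarging $S$ so that $x_0$, $y_0$, and the coefficients of $f,g$ are $S$-integers, the pairs $(f^{m-k}(x_0),g^{m-k}(y_0))$ form an infinite family of $S$-integer solutions of $f^k(X)=g^k(Y)$ for every fixed $k\in\N$. Lemma~\ref{reduction} then yields $r\in\N$, a linear $\ell\in\Kbar[X]$ with $f^r=g^r\circ\ell$, and infinitely many $n\in\N$ for which $f^{rn}=g^{rn}\circ\ell_n$ with $\ell_n\in\Kbar[X]$ linear.

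Setting $F:=g^r$, these data are precisely the hypothesis of Proposition~\ref{linearprop} applied to $(F,\ell)$. Its first alternative $F^k=(F\circ\ell)^k$ reads $g^{rk}=f^{rk}$, and we are done. The main obstacle is the second, monomial alternative: $F=v^{-1}\circ\epsilon X^D\circ v$ and $\ell=v^{-1}\circ\delta X\circ v$ for some linear $v\in\Kbar[X]$, scalars $\epsilon,\delta\in\Kbar^*$, and $D=(\deg g)^r$, whence $f^r=v^{-1}\circ\eta X^D\circ v$ with $\eta=\epsilon\delta^D$. Setting $u=v(x_0)$, $w=v(y_0)$, $\lambda=\eta/\epsilon$, and $\mu=w/u$, and using $(\epsilon X^D)^n=\epsilon^{(D^n-1)/(D-1)}X^{D^n}$, the infinitely many equalities $f^{rn}(x_0)=g^{rn}(y_0)$ translate into
\[
\lambda^{(D^n-1)/(D-1)}=\mu^{D^n}\qquad\text{for infinitely many }n,
\]
with the degenerate possibilities $u=0$ or $w=0$ handled directly from the dynamics of $X\mapsto\epsilon X^D$.

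To finish, I would work one place at a time on a number field $K'\supset K$ containing $\lambda$ and $\mu$. For each place $\nu$ of $K'$, taking $\log|\cdot|_\nu$ of the displayed equation yields an affine linear relation in $D^n$; since a nontrivial such relation has at most one solution in $n$, we conclude $|\lambda|_\nu=|\mu|_\nu=1$ at every $\nu$. By Kronecker's theorem $\lambda$ and $\mu$ are therefore roots of unity. Setting $\rho:=\lambda\mu^{-(D-1)}$, the relation rearranges to $\rho^{D^n}=\lambda$ for each of the infinitely many $n$, so $\rho$ has finite order $N'$; factoring $N'=d_1d_2$ with $d_1$ supported on primes dividing $D$ and $\gcd(d_2,D)=1$, the sequence $D^n\bmod N'$ is eventually divisible by $d_1$, so the fixed residue $\log_\rho\lambda$ is also divisible by $d_1$, forcing $\gcd(\mathrm{ord}(\lambda),D)=1$. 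Writing $N=\mathrm{ord}(\lambda)$, the integer $D$ is a unit modulo $N(D-1)$, so $D^\omega\equiv 1\pmod{N(D-1)}$ for some $\omega\in\N$; hence $N\mid(D^\omega-1)/(D-1)$, giving $\lambda^{(D^\omega-1)/(D-1)}=1$. Conjugating back through $v$, this identity is precisely $f^{r\omega}=g^{r\omega}$.
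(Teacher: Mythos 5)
Your proof is correct and follows the same two-stage architecture as the paper: reduce the orbit-intersection hypothesis to the identity $f^r=g^r\circ\ell$ and the family $f^{rn}=g^{rn}\circ\ell_n$ via Lemma~\ref{reduction}, invoke Proposition~\ref{linearprop}, and then finish off the monomial alternative directly. The only real divergence is in the endgame for the monomial case. You first normalize to the diagonal $m=n$ (a clean pigeonhole on $m-n$ using the height comparison, which the paper instead folds into the choice of $s_1,s_2$ with $\min(m,n)$ large), and you then deduce that $\lambda=\delta^D$ and $\mu$ are roots of unity by a place-by-place valuation argument plus Kronecker's theorem. The paper instead isolates a quantity $\delta_1$ with $\delta_1^{d-1}=\delta^d$, shows $\delta_1$ is a $d^k$-th power infinitely often, and deduces it is a root of unity from unique factorization of fractional ideals together with finite generation of the unit group. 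The two finishes are close cousins — both convert ``infinitely many power relations'' into ``root of unity of order coprime to $D$'' — but your valuation-theoretic route avoids introducing the auxiliary $(d-1)$-th root and is a bit more self-contained; the paper's route avoids normalizing to the diagonal. Be sure, when writing this up, to spell out the secondary pigeonhole needed to pass from ``infinitely many $m$ with $f^m(x_0)=g^m(y_0)$'' to ``infinitely many $n$ with $f^{rn}(x_0)=g^{rn}(y_0)$'' (by fixing a residue class mod $r$ and shifting $x_0,y_0$), and to check the direction of the shift in the reduction to $m=n$ (if $m-n=c>0$ you should replace $x_0$ by $f^c(x_0)$, not $y_0$ by a backward iterate of $g$).
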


\begin{proof}
Let $S$ be a finite set of nonarchimedean places of $K$ such that
the ring of $S$-integers $\OS$ contains $x_0$, $y_0$, and every
coefficient of $f$ and $g$.
Then $\OS$ contains every $f^n(x_0)$ and $g^n(y_0)$
with $n\in\N$.

Our hypotheses imply that $x_0$ is not preperiodic for $f$, and
$y_0$ is not preperiodic for $g$.  Moreover, for every $k\in\N$, the
equation $f^k(x)=g^k(y)$ has infinitely many
solutions $(x,y)\in\OS\times\OS$.

By Lemma~\ref{reduction}, there is some $r\in\N$ such that, for both
$n=1$ and infinitely many $n\in\N$, we have $f^{rn}=g^{rn}\circ\ell_n$
with $\ell_n\in\Kbar[X]$ linear.  Put $F=f^r$ and $\ell=\ell_1^{-1}$;
then $g^r=F\circ\ell$, and for infinitely many $n$ we have
$F^n=(F\circ\ell)^n\circ\ell_n$.  If $F$ and $F\circ\ell$ have a
common iterate, then so do $f$ and $g$.  By
Proposition~\ref{linearprop}, it remains only to consider the case
that $F=v^{-1} \circ \epsilon X^d \circ v$ and $\ell=v^{-1}\circ\delta
X\circ v$, where $v\in \Kbar[X]$ is linear and $\epsilon,\delta\in
\Kbar^*$.  Note that $d>1$.

By hypothesis, the set $\M$ of pairs $(m,n)\in\N\times\N$ satisfying
$f^m(x_0) = g^n(y_0)$ is infinite, and (from non-preperiodicity) its
projections onto each coordinate are injective.  Thus, for some
$s_1,s_2\in\N$, the set $\M$ contains infinitely many pairs
$(rm+s_1,rn+s_2)$ with $m,n \in \N$; since the projections are
injective, $\M$ contains pairs of this form in which $\min(m,n)$ is
arbitrarily large.  For any $m,n\in\N$ such that $(rm+s_1,rn+s_2)\in\M$,
we have
$F^m(x_1)=(F\circ\ell)^n(y_1)$, where $x_1 := f^{s_1}(x_0)$ and $y_1
:= g^{s_2}(y_0)$. Thus

\begin{equation*}
\begin{split}
   v^{-1}(\epsilon^{(d^m-1)/(d-1)} v(x_1)^{d^m}) & =
   F^m(x_1)\\
  & = (F\circ\ell)^n(y_1) \\
   & = v^{-1}((\epsilon\delta^d)^{(d^n-1)/(d-1)} v(y_1)^{d^n}),
\end{split}
\end{equation*}
so
\begin{equation}\label{molehill}
v(x_1)^{d^m} \epsilon^{(d^m-d^n)/(d-1)}  = \delta^{d(d^n-1)/(d-1)}
v(y_1)^{d^n}.
\end{equation}
We cannot have $v(x_1)=0$, since otherwise $x_1=f^{s_1}(x_0)$ is a fixed
point of $F=f^r$, contrary to our hypotheses.  Likewise $v(y_1)\ne 0$.
Now let $\epsilon_1, \delta_1\in\Kbar$ satisfy
$\epsilon_1^{d-1} = \epsilon$ and $\delta_1^{d-1} = \delta^d$, so
\eqref{molehill} implies
\begin{equation}
\label{molehill 2}
\delta_1 = v(x_1)^{-d^m} \cdot \epsilon_1^{d^n - d^m} \cdot
\delta_1^{d^n} \cdot v(y_1)^{d^n}.
\end{equation}

Since \eqref{molehill 2} holds for pairs $(m,n)$ with
$\min(m,n)$ arbitrarily large, there are infinitely many $k\in\N$
for which  $\delta_1$ is a $d^k$-th power in the number field
$K_0:=\Q(v(x_1),v(y_1),\epsilon_1,\delta_1)$.  Letting $\OO$ be the
ring of algebraic integers in $K_0$, it follows that the fractional
ideal of $\OO$ generated by $\delta_1$ is a $d^k$-th power for
infinitely many $k$; now unique factorization of fractional ideals
implies $\delta_1$ is in the unit group $U$ of $\OO$.  Moreover,
$\delta_1$ is a $d^k$-th power in $U$ for infinitely many $k$;
since $U$ is a finitely generated abelian group, $\delta_1$ must be
a root of unity whose order $N$ is coprime to $d$.  Thus
$N\mid (d^t-1)$ for some $t\in\N$.  Now $(F\circ\ell)^t=
v^{-1}\circ (\epsilon\delta^d)^{(d^t-1)/(d-1)} X^{d^t}\circ v$,
and since $\delta^d=\delta_1^{d-1}$ and $\delta_1^{d^t-1}=1$,
it follows that $(F\circ\ell)^t=F^t$, as desired.
\end{proof}

%#######################################################################
%#######################################################################

\section{Proof of Proposition~\ref{linearprop}}
\label{proofs}

In this section we complete the proof of Theorem~\ref{number fields},
by proving Proposition~\ref{linearprop}.
We consider two cases, depending on whether $F$ is gotten from a monomial
by composing with linears on both sides.  Our strategy is to show in both
cases that there are only finitely many linears $\hat\ell\in K[X]$ for
which there exists $n$ such that $(F\circ\ell)^n\circ\hat\ell = F^n$;
after this, we pick two values $n<N$ having the same $\hat\ell$, and
deduce that $F^{N-n}=(F\circ\ell)^{N-n}$.

\begin{lemma}
\label{key claim}
Let $K$ be a field of characteristic zero, and suppose $F\in K[X]$ has
the property that $u\circ F\circ v$ has at least two monomial terms
whenever $u,v\in K[X]$ are linear.
Then the equation $F\circ b =
a\circ F$ has only finitely many solutions in linear polynomials
$a,b\in K[X]$.
\end{lemma}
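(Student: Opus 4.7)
The plan is to apply Lemma~\ref{Halblem} directly to the equation $F\circ b = a\circ F$. Write $d = \deg F$ and let $\theta_d, \theta_{d-1}$ be the top two coefficients of $F$. For any linear pair $(a,b)$ solving $F\circ b = a\circ F$, Halblem supplies $F$-dependent constants $\beta = -\theta_{d-1}/(d\theta_d)$ and $\alpha = -F(\beta)$, integers $r,s \ge 0$, and an element $\gamma \in K^*$ with $\gamma^s = 1$, such that $a = -\alpha + \gamma^r(X+\alpha)$, $b = \beta + \gamma(X-\beta)$, and the shifted polynomial $\hat F := \alpha + F(X+\beta)$ lies in $X^r K[X^s]$.

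The key step is to use the hypothesis to force $s$ to be a positive integer depending only on $F$. Taking $u(X) = X+\alpha$ and $v(X) = X-\beta$, the hypothesis applied to these specific linears tells us that $u\circ F\circ v = \hat F$ has at least two monomial terms, so $\hat F$ is not a monomial. Following the proof of Halblem, we may then take $s$ to be the greatest common divisor of the differences of degrees of nonzero terms of $\hat F$ and $r$ to be the smallest such degree; both are then positive integers depending only on $F$, with $1 \le s \le d$.

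Since $K$ has characteristic zero, the equation $\gamma^s = 1$ has at most $s$ solutions in $K^*$, and each such $\gamma$ determines $(a,b)$ uniquely via the formulas above. Hence the equation $F\circ b = a\circ F$ admits at most $s \le d$ linear solutions.

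The only subtle point -- and the main thing to watch -- is verifying that the hypothesis rules out the degenerate case $s = 0$ in Halblem (which would occur precisely when $\hat F$ is a monomial, allowing $\gamma$ to range over all of $K^*$); the hypothesis on two monomial terms of $u\circ F\circ v$ is tailored precisely for this purpose. Beyond this observation, the argument is a routine unwinding of Halblem.
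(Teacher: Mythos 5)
Your proof is correct and follows essentially the same route as the paper's: apply Lemma~\ref{Halblem} with the explicit $\alpha,\beta$ depending only on $F$, observe that the hypothesis (with $u=X+\alpha$, $v=X-\beta$) forces $\hat F$ to have at least two terms so that $s$ can be taken positive (and depending only on $F$), and conclude finiteness from the fact that $\gamma$ must be an $s$-th root of unity determining $(a,b)$. This is exactly the argument in the paper.
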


\begin{proof}[Proof of Lemma~\ref{key claim}.]
Our hypothesis implies $\deg(F)>1$.
Pick $\alpha,\beta\in K$ as in Lemma~\ref{Halblem}, and put
$\hat F:=\alpha+F(X+\beta)$; note that these choices depend only on $F$.
Then $\hat F\in X^r K[X^s]$ for some integers $r,s\ge 0$.  Our hypothesis
implies $s\ne 0$; now choose $s$ to be as large as possible.
By Lemma~\ref{Halblem}, if $F\circ b=a\circ F$ with $a,b\in K[X]$ linear,
then there is an $s\tth$ root of unity $\gamma\in K$ such that
$b=\beta+\gamma(X-\beta)$ and $a=-\alpha+\gamma^r(X+\alpha)$.
Since there are only finitely many possibilities for $\gamma$, there are
only finitely many possibilities for $a$ and $b$.
\end{proof}

\begin{remark}
Our proof shows that the number of solutions is less than $\deg(F)$
(in fact: the number of solutions is at most the size of the largest group of
roots of unity in $K$ of order less than $\deg(F)$).
\end{remark}

\begin{lemma}
\label{another one}
Let $K$ be a field of characteristic zero, let $u,v,\ell\in K[X]$ be linear,
and let $F=u\circ X^d\circ v$ where $d>1$.
The following are equivalent:
\begin{enumerate}
\item The equation
\begin{equation}\label{taketwo}
F\circ\ell\circ F\circ b = a\circ F\circ F
\end{equation}
has infinitely many solutions in linears $a,b\in K[X]$.
\item 
$F=v^{-1} \circ \epsilon X^d \circ v$ and
$\ell=v^{-1}\circ\delta X\circ v$ for some $\epsilon,\delta\in K^*$.
\end{enumerate}
\end{lemma}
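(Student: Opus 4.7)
The plan is to prove both implications, with the harder direction $(1)\Rightarrow(2)$ proceeding via Lemma~\ref{uniq} and coefficient comparison. For $(2)\Rightarrow(1)$, a direct computation suffices: with $F = v^{-1}\circ\epsilon X^d\circ v$ and $\ell = v^{-1}\circ\delta X\circ v$, taking $b = v^{-1}\circ\gamma X\circ v$ for any $\gamma\in K^*$ and $a = v^{-1}\circ\delta^d\gamma^{d^2}X\circ v$ makes both sides of \eqref{taketwo} equal to $v^{-1}\circ\epsilon^{d+1}\delta^d\gamma^{d^2}X^{d^2}\circ v$. Infinitely many choices of $\gamma$ then produce infinitely many linear solutions.

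For $(1)\Rightarrow(2)$, I would first apply Lemma~\ref{uniq} to \eqref{taketwo}, viewed as $(F\circ\ell)\circ(F\circ b) = (a\circ F)\circ F$: since $\deg(F\circ b)=d=\deg(F)$, this produces a linear $m\in K[X]$ with
\[
F\circ b = m\circ F \qquad\text{and}\qquad a\circ F = F\circ\ell\circ m.
\]
Each $m$ determines $b$ (and then $a$) up to finitely many choices, so the infinitely many solutions $(a,b)$ yield infinitely many linear $m$.

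Using $F = u\circ X^d\circ v$, I would rewrite $F\circ b = m\circ F$ as $\hat b(X)^d = \hat m(X^d)$, where $\hat b := v\circ b\circ v^{-1}$ and $\hat m := u^{-1}\circ m\circ u$. Expanding $\hat b = \gamma X+\eta$, the binomial theorem together with $d>1$ and characteristic zero forces $\eta = 0$, so $\hat b = \gamma X$ and $\hat m = \gamma^d X$ for some $\gamma\in K^*$. Applying the same rigidity to $a\circ F = F\circ\ell\circ m$ yields $\gamma'\in K^*$ with $\ell\circ m = v^{-1}\circ\gamma' X\circ v$.

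Combining $m = u\circ\gamma^d X\circ u^{-1}$ with the last identity, and setting $w := v\circ u$ and $t := v\circ\ell\circ u$, I obtain $t(X) = \gamma'\, w(\gamma^{-d}X)$. Writing $w = w_1X+w_0$ and $t = t_1X+t_0$, coefficient matching gives $\gamma' = t_1\gamma^d/w_1$ together with $t_0 = t_1w_0\gamma^d/w_1$; since $\gamma$ ranges over infinitely many values while the $t_i, w_i$ are fixed, this forces $w_0 = t_0 = 0$. Then $v\circ u$ and $v\circ\ell\circ u$ are both scalar multiples of $X$, which translates back to $F = v^{-1}\circ\epsilon X^d\circ v$ and $\ell = v^{-1}\circ\delta X\circ v$ with $\epsilon = w_1$ and $\delta = t_1/w_1$. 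The main obstacle is the bookkeeping of the conjugations by $u$ and $v$; the substantive input is Lemma~\ref{uniq}, and the coefficient comparison exploits the rigidity of the identity $(\gamma X+\eta)^d = \mu X^d + \nu$ in characteristic zero.
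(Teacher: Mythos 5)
Your proof is correct and takes essentially the same approach as the paper: apply Lemma~\ref{uniq} once to split \eqref{taketwo} into two relations between $F$ and linears, parametrize those relations explicitly using $F=u\circ X^d\circ v$, and then exploit the infinitude of solutions to force a constant term to vanish, which is exactly the condition in (2). The only difference from the paper is bookkeeping: you choose the intermediate linear $m$ with $F\circ b = m\circ F$ and $a\circ F = F\circ\ell\circ m$, whereas the paper takes $c$ with $\ell\circ F\circ b = c\circ F$ and $F\circ c = a\circ F$, and your final finiteness step is phrased via coefficients $w_0,t_0$ of $v\circ u$ and $v\circ\ell\circ u$ while the paper phrases it via $(v\circ u)(0)$ and the linear $u^{-1}\circ\ell^{-1}\circ v^{-1}$; these are equivalent.
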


\begin{proof}[Proof of Lemma~\ref{another one}.]
Pick any solution $(a,b)$ to (\ref{taketwo}).
By Lemma~\ref{uniq}, there is a linear $c\in K[X]$ such that
\[
\ell\circ F\circ b = c\circ F,
\]
which implies
\[
F\circ c = a\circ F.
\]
For any linears $e_1,e_2\in K[X]$ such that
$e_1\circ F\circ e_2 = F$, we have
\[
X^d = (u^{-1}\circ e_1\circ u) \circ X^d \circ (v\circ e_2\circ v^{-1}),\]
so $v\circ e_2\circ v^{-1} = \gamma X$ and $u^{-1}\circ e_1\circ u = X/\gamma^d$
for some $\gamma\in K^*$.
Thus, there exist $\gamma_1,\gamma_2\in K^*$ such that
\begin{align*}
b &= v^{-1}\circ\gamma_1 X\circ v \\
c^{-1}\circ\ell &= u\circ \frac{X}{\gamma_1^d}\circ u^{-1} \\
c &= v^{-1}\circ\gamma_2 X\circ v \\
a^{-1} &= u\circ \frac{X}{\gamma_2^d}\circ u^{-1}.
\end{align*}
We can eliminate $c$ from the second and third equations:
\[
u\circ \gamma_1^d X \circ u^{-1} = \ell^{-1}\circ c =
\ell^{-1}\circ v^{-1}\circ\gamma_2 X\circ v.
\]
Thus,
\[
\gamma_1^d X= (u^{-1}\circ \ell^{-1}\circ v^{-1})\circ \gamma_2 X\circ
(v\circ u).\]
Write $\alpha:=(v\circ u)(0)$.  Since $\gamma_1^d X$ fixes $0$, the linear
 polynomial
$h:=u^{-1}\circ\ell^{-1}\circ v^{-1}$ must map $\gamma_2\alpha$ to $0$.
Since $\alpha$ and $h$ do not depend on $a$ and $b$, it follows that if
$\alpha\ne 0$ then $\gamma_2$ (and thus $\gamma_1^d$) does not depend on
$a$ and $b$, so there are
only finitely many possibilities for $a$ and $b$.
Now assume $\alpha=0$, so $0$ is fixed by both $v\circ u$ and
$u^{-1}\circ\ell^{-1}\circ v^{-1}$, whence these two linears have the form
$\epsilon X$ and $\hat\delta X$ with $\epsilon,\hat\delta\in K^*$.  Then
 $u=v^{-1}\circ
\epsilon X$ and $\ell^{-1}=v^{-1}\circ \epsilon\hat\delta X\circ v$,
so $F=v^{-1}\circ\epsilon X^d\circ v$ and (with $\delta=1/(\epsilon\hat\delta)$)
we have $\ell = v^{-1}\circ \delta X\circ v$.

It remains only to show that, when $F=v^{-1}\circ \epsilon X^d\circ v$ and
$\ell = v^{-1}\circ \delta x\circ v$, the number of solutions of (\ref{taketwo})
is infinite.  To this end, pick any $\iota\in K^*$, and note that
$b=v^{-1}\circ\iota X\circ v$ and $a=v^{-1}\circ \delta^d\iota^{d^2}X\circ v$
satisfy (\ref{taketwo}).
\end{proof}

\begin{remark}
This proof shows that, when the number of solutions to (\ref{taketwo}) is finite,
this number is at most $d$ (in fact: at most
the number of $d\tth$ roots of unity in $K$).
\end{remark}

\begin{proof}[Proof of Proposition~\ref{linearprop}.]
We have
\begin{equation}\label{useful}
(F\circ\ell)^n\circ\ell_n = F^n\end{equation} for every $n$ in some
infinite subset $\M$ of $\N$.  For $n\in \M$, we apply Lemma~\ref{uniq} to
(\ref{useful}) with
$B=F\circ\ell\circ\ell_n$ and $D=F$, to conclude that there is a linear
$u_n\in K[X]$ such that 
\[
F\circ \ell\circ\ell_n=u_n \circ F.
\]
By Lemma~\ref{key claim}, if $F$ is not gotten from a monomial by composing
with linears on both sides, then $\{\ell_n:n\in \M\}$ is finite.

Next, for $n\in \M$ with $n>1$, apply Lemma~\ref{uniq} to (\ref{useful}) with
$B=(F\circ\ell)^2\circ\ell_n$ and $D=F^2$, to conclude that there is a
linear $v_n\in K[X]$ such that
\[
(F\circ\ell)^2\circ\ell_n = v_n\circ F^2.
\]
By Lemma~\ref{another one}, if $F$ is gotten from a monomial by composing
with linears on both sides, then either $\{\ell_n:n\in \M\}$ is finite
or conclusion $(2)$ of Proposition~\ref{linearprop} holds.

Thus, whenever $(2)$ of Proposition~\ref{linearprop} does not hold, the set
$\{\ell_n:n\in \M\}$ is finite, so there exist $n,N\in \M$ such that
$\ell_n=\ell_N$ and $n<N$.  Then
\begin{align*}
F^{N-n} \circ F^n &= F^N \\
&= (F\circ\ell)^N \circ \ell_n \\
&= (F\circ\ell)^{N-n} \circ (F\circ\ell)^n \circ \ell_n \\
&= (F\circ\ell)^{N-n} \circ F^n,
\end{align*}
so $F^{N-n} = (F\circ\ell)^{N-n}$, as desired.
\end{proof}

%#######################################################################
%#######################################################################

\section{Some reductions}
\label{trivial action}

In this section we show that it suffices to prove Theorems~\ref{main result}
and \ref{complex2}
in case $K$ is a finitely generated extension of $\Q$.  Moreover, for any
such $K$, it suffices to prove these results in case
$\deg(f)=\deg(g)>1$ and the line is the diagonal, $X=Y$.

We begin with the first reduction.
For fixed $K,f,g,x_0,y_0,L$, only finitely
many elements of $K$ occur as coefficients of $f$ or $g$, as values $x_0$
or $y_0$, or in the defining equation for $L$.  Let $K_0$
be the extension of $\Q$ generated by these finitely many elements.
Then Theorem~\ref{main result} holds for $(K,f,g,x_0,y_0,L)$ if it holds for
$(K_0,f,g,x_0,y_0,L)$, and likewise for Theorem~\ref{complex2}.

We next show that we need only consider the case that the line is the diagonal.

\begin{lemma}\label{linear equals diagonal}
If Theorem~\ref{main result} is true for the line $X=Y$, then it is
true for every line.
\end{lemma}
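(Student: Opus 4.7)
The plan is to split on the slope of $L$ and reduce to the diagonal case by conjugating $g$ with an affine linear polynomial over $K$, after disposing separately of the horizontal and vertical lines, where this conjugation does not apply.

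First I would handle the degenerate cases. Suppose $L$ is horizontal, say $L:Y=c$ with $c\in K$ (the vertical case $X=c$ being entirely symmetric, with the roles of $f,g$ and $x_0,y_0$ interchanged). The hypothesis forces $g^n(y_0)=c$ for infinitely many $n$; since the sequence $(g^n(y_0))_{n\ge 0}$ is forward-deterministic, this can happen only if the sequence is eventually periodic with $c$ lying on its periodic cycle. Hence $g^k(c)=c$ for some $k\in\N$, and the image of $L$ under $(f,g)^k$ is the horizontal line $Y=g^k(c)=c=L$, so $L$ is periodic under $(f,g)$.

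For the main (non-degenerate) case $L:Y=\alpha X+\beta$ with $\alpha\in K^*$ and $\beta\in K$, I would set $h(X):=\alpha X+\beta\in K[X]$ and introduce the conjugate $\tilde g:=h^{-1}\circ g\circ h\in K[X]$ together with $\tilde y_0:=h^{-1}(y_0)\in K$. Using $\tilde g^n=h^{-1}\circ g^n\circ h$, the point $(f^n(x_0),g^n(y_0))$ lies on $L$ iff $g^n(y_0)=h(f^n(x_0))$ iff $f^n(x_0)=\tilde g^n(\tilde y_0)$. Thus the $(f,\tilde g)$-orbit of $(x_0,\tilde y_0)$ meets the diagonal infinitely often. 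The assumed diagonal case of Theorem~\ref{main result} then yields some $k\in\N$ with $f^k=\tilde g^k$ on $\A^1$; substituting the definition of $\tilde g$ and rearranging gives $h\circ f^k=g^k\circ h$, which is exactly the assertion $(f,g)^k(L)=L$, so $L$ is periodic under $(f,g)$.

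I do not anticipate any genuine obstacle: this is essentially a conjugation-and-bookkeeping argument, leaning on the fact that translating the second coordinate by a linear $h\in K[X]$ preserves everything in sight (the field of definition, the polynomiality of $g$, the initial point $y_0$, and the infinitude of hits on $L$). The only step worth double-checking is the claim in the horizontal case that ``$L$ periodic under $(f,g)$'' is exactly ``$c$ periodic under $g$''; this is immediate from the fact that $(f,g)$ carries a horizontal line $Y=c$ to the horizontal line $Y=g(c)$, since $f$ acts surjectively on $\A^1$ over $\overline{K}$.
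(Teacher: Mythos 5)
Your proof is correct and follows essentially the same route as the paper: dispose of horizontal and vertical lines directly (observing that the constant must be a periodic point for $g$, resp.\ $f$), then conjugate $g$ by the linear map parametrizing $L$ to reduce to the diagonal. Writing $L$ as $Y=h(X)$ rather than $X=\ell(Y)$ (so that one conjugates by $h^{-1}$ rather than by $\ell$) is just a choice of inverse parametrization, and the final identity $h\circ f^k=g^k\circ h$ is exactly the periodicity condition the paper extracts.
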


\begin{proof}
If $L$ has the form $X=\alpha$ then the theorem is obvious:
if there are infinitely many $n$ such that $f^n(x_0) = \alpha$, then
$\alpha$ is periodic point for $f$, so $X = \alpha$ is a periodic line
for $(f,g)$.  Likewise the result is
clear if $L$ has the form $Y=\beta$, so we may assume $L$ is
$X = \ell(Y)$ with $\ell\in K[Y]$ of degree one.
Suppose $\{(f^n(x_0),g^n(y_0)):n\in\N\}$ has infinite intersection
with $L$.  If $f^n(x_0) = \ell(g^n(y_0))$ then
$f^n(x_0) = (\ell\circ g\circ \ell^{-1})^n(\ell(y_0))$.
Thus, assuming Theorem~\ref{main result} for the line $X=Y$,
we conclude that $X=Y$ is periodic under the action of $(f,\ell\circ g\circ
\ell^{-1})$; it follows that $X=\ell(Y)$ is periodic under the
$(f,g)$-action.
\end{proof}

The analogous result for Theorem~\ref{complex2} follows from a similar
argument.

\begin{lemma}\label{linear equals diagonal2}
If Theorem~\ref{complex2} is true in case $\alpha=1$ and $\beta=0$,
then it is true for arbitrary $\alpha$ and $\beta$.
\end{lemma}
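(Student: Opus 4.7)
The plan is to reduce to the diagonal case $\alpha=1,\beta=0$ by conjugating $g$ by the linear polynomial $h(X):=\alpha X+\beta$. Since $\alpha\ne 0$, $h$ has a linear inverse $h^{-1}\in K[X]$, so I would set $\tilde g:=h^{-1}\circ g\circ h\in K[X]$ and $\tilde y_0:=h^{-1}(y_0)\in K$. Note $\deg(\tilde g)=\deg(g)=\deg(f)>1$, so the hypotheses of Theorem~\ref{complex2} remain satisfied for the tuple $(f,\tilde g,x_0,\tilde y_0)$.

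A straightforward induction gives $\tilde g^m=h^{-1}\circ g^m\circ h$ for all $m\ge 0$, hence $\tilde g^m(\tilde y_0)=h^{-1}(g^m(y_0))$ and $\OO_{\tilde g}(\tilde y_0)=h^{-1}(\OO_g(y_0))$. Therefore the map $(a,b)\mapsto(a,h^{-1}(b))$ is a bijection
\[
\OO_f(x_0)\times\OO_g(y_0)\ \lra\ \OO_f(x_0)\times\OO_{\tilde g}(\tilde y_0),
\]
and under this bijection the line $Y=\alpha X+\beta$ corresponds to the diagonal $Y=X$: indeed $(a,b)$ lies on the former if and only if $b=h(a)$, if and only if $h^{-1}(b)=a$, if and only if $(a,h^{-1}(b))$ lies on the latter. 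Consequently, infinitely many points of $\OO_f(x_0)\times\OO_{\tilde g}(\tilde y_0)$ lie on the diagonal.

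Applying the assumed $(\alpha,\beta)=(1,0)$ case of Theorem~\ref{complex2} to $(f,\tilde g,x_0,\tilde y_0)$, I obtain $k\in\N$ with $\tilde g^k=f^k$, i.e.\ $h^{-1}\circ g^k\circ h=f^k$. Precomposing with $h^{-1}$ on the right and postcomposing with $h$ on the left yields $g^k\circ h=h\circ f^k$, which is precisely the desired identity $g^k(\alpha X+\beta)=\alpha f^k(X)+\beta$.

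I do not anticipate any serious obstacle: this lemma is a change-of-variables/bookkeeping step, and the entire content is that conjugation by the affine map $h$ carries the line $Y=\alpha X+\beta$ to the diagonal while sending $(f,g)$-orbits to $(f,\tilde g)$-orbits bijectively. The only point worth checking carefully is that this bijection genuinely preserves the "infinitely many points" hypothesis, which is immediate because $h^{-1}$ is itself a bijection on $K$.
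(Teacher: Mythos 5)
Your proof is correct, and it is exactly the conjugation argument the paper has in mind (the paper only sketches this lemma by reference to the analogous Lemma~\ref{linear equals diagonal}, which uses the same change of variables $g\mapsto \ell\circ g\circ\ell^{-1}$). No gaps.
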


We now prove Theorem~\ref{main result} in case $f$ and $g$ are linear
polynomials.  As noted in the introduction, Theorem~\ref{complex2}
fails in this case, so our proof must necessarily distinguish between
the equations $f^n(x_0)=g^n(y_0)$ and $f^m(x_0)=g^n(y_0)$.

\begin{prop}
\label{linear polynomials}
Theorem~\ref{main result} holds if $\deg(f) = \deg(g) =1$ and $L$ is the
diagonal.
\end{prop}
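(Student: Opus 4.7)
My plan is to pass to the explicit closed form of the iterates of a linear polynomial and analyze the resulting equation in $n$ through a case split. Writing $f(X)=aX+b$ and $g(X)=cX+d$ with $a,c\in K^*$, the key formulas are $f^n(x_0)=a^n(x_0-\alpha)+\alpha$ with $\alpha=b/(1-a)$ when $a\ne 1$, and $f^n(x_0)=x_0+nb$ when $a=1$ (symmetrically for $g$ with $\beta=d/(1-c)$). The hypothesis that $\{(f^n(x_0),g^n(y_0)):n\in\N\}$ meets the diagonal in infinitely many points forces that neither $x_0$ is preperiodic for $f$ nor $y_0$ for $g$; I would use this to conclude that $b\ne 0$ whenever $a=1$, and that $a$ is not a root of unity and $x_0\ne\alpha$ whenever $a\ne 1$, with analogous statements for $g$.

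The easiest cases are handled directly. When $a=c=1$, the equation $x_0+nb=y_0+nd$ has infinitely many $n$-solutions iff $b=d$ and $x_0=y_0$, so $f=g$ and the diagonal is fixed. When exactly one of $a,c$ equals $1$, say $a=1$ and $c\ne 1$, I would fix an embedding $K\hookrightarrow\C$ and rule out the equation $x_0+nb=c^n(y_0-\beta)+\beta$ by comparing archimedean magnitudes in the three subcases $|c|>1$, $|c|<1$, and $|c|=1$; in each the two sides grow at genuinely different rates, so equality can hold for only finitely many $n$.

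The main case is $a,c\ne 1$. Setting $u=x_0-\alpha\ne 0$ and $v=y_0-\beta\ne 0$, the equation reads $a^n u-c^n v=\beta-\alpha$. If $\alpha=\beta$, this becomes $(a/c)^n=v/u$, and comparing any two distinct $n$-solutions forces $a/c$ to be a root of unity of some order $M$, so $a^M=c^M$; together with $\alpha=\beta$, direct substitution in the closed-form iterate gives $f^M=g^M$ as polynomials.

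The main obstacle is the residual subcase $\alpha\ne\beta$, where I would rewrite the equation as the $S$-unit equation
\[
\frac{a^n u}{\beta-\alpha}+\frac{-c^n v}{\beta-\alpha}=1
\]
and apply the $S$-unit equation theorem of Evertse (for number fields), or rather its extension to finitely generated fields of characteristic zero due to van der Poorten--Schlickewei and Evertse--Schlickewei--Schmidt, which is available thanks to the reduction made at the start of the section to $K$ finitely generated over $\Q$. This yields only finitely many pairs of $S$-unit solutions, and since $a$ is not a root of unity the assignment $n\mapsto(a^n u,c^n v)$ is injective, contradicting the infinitude of $n$. In every surviving case we obtain $f^k=g^k$ for some $k\in\N$, which is equivalent to the diagonal being periodic under the action of $(f,g)$.
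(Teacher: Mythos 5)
Your proof is correct and mirrors the paper's argument: the same explicit closed form for iterates of a linear map, the same case split on whether the multipliers equal $1$, the same archimedean growth comparison when exactly one multiplier is $1$, and the same Diophantine finiteness in the main case (with your $\alpha=\beta$ subcase corresponding to the paper's $\hat x_0=\hat y_0$, giving a root of unity directly). The only cosmetic difference is that the paper first normalizes $x_0=y_0$ and cites an observation of Lang (proved via Siegel's theorem applied to an auxiliary cubic curve) for the finiteness of $n$ with $\hat a\,a^n+\hat b\,c^n=1$, whereas you invoke the equivalent two-term $S$-unit equation theorem over finitely generated multiplicative groups directly.
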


\begin{proof}
  Suppose the hypotheses of Theorem~\ref{main result} hold.  As above,
  we may assume $K\subseteq\C$.  By replacing $x_0$ and $y_0$ with
  $f^{n_0}(x_0)$ and $g^{n_0}(y_0)$ (for some $n_0\in\N$), we may
  assume $x_0 = y_0$.  Let $f(X) = \alpha X +\beta$ and $g(X) = \gamma
  X+\delta$.  Note that $\alpha$ cannot be a root of unity different
  from $1$, for otherwise some iterate of $f$ would be the identity
  map, contradicting infinitude of $\{f^n(x_0):n\in\N\}$.  Likewise,
  $\gamma$ is not a root of unity different from $1$.  We consider two
  cases:

\emph{Case 1.} Neither $\alpha$ nor $\gamma$ equals $1$.

For $n\in \N$, we have $f^n(x_0) = \alpha^n\hat{x}_0 -
\frac{\beta}{\alpha-1}$ and $g^n(x_0) = \gamma^n \hat{y}_0 -
\frac{\delta}{\gamma-1}$, where
$\hat{x}_0:=x_0+\frac{\beta}{\alpha-1}$ and $\hat{y}_0:=x_0 +
\frac{\delta}{\gamma-1}$.  Since $x_0$ is not preperiodic for $f$ or
$g$, both $\hat{x}_0$ and $\hat{y}_0$ are nonzero.  By the hypothesis
of Theorem~\ref{main result}, there are infinitely many $n\in\N$ such
that $\alpha^n \hat{x}_0 - \gamma^n \hat{y}_0 = \hat{x}_0 -
\hat{y}_0$.  If $\hat{x}_0 \neq \hat{y}_0 $, we may divide through
and obtain infinitely many $n$ such that
$$
\hat{a} \alpha^n + \hat{b} \gamma^n = 1$$
for some constants $\hat{a}$ and $\hat{b}$.
% for $\hat{a} = \hat{x}_0/(\hat{x}_0 - \hat{y}_0)$ and $\hat{b} = -
%\hat{y}_0/(\hat{x}_0 - \hat{y}_0)$. 
As noted by Lang \cite[p.~28]{Lang_int}, this is impossible (as
can be seen by passing to a curve $\hat{a} \alpha^i t^3 +
\hat{b} \gamma^i u^3 = 1$, with $0 \leq i \leq 2$, and using Siegel's
theorem on integral points).
Hence, we must have $\hat{x}_0 = \hat{y}_0$, so there are infinitely
many $n\in\N$ for which $\alpha^n = \gamma^n$, and $f^n=g^n$ for each
such $n$.

\emph{Case 2.} Either $\alpha$ or $\gamma$ equals $1$.

Without loss of generality, we may assume $\alpha=1$. If also $\gamma=1$,
then since $f^n(x_0) = g^n(x_0)$ for
some $n\in\N$, we must have $\beta=\delta$, so $f=g$ as desired. 
Now assume $\gamma\ne 1$. Then
$g^n(x_0) = \gamma^n\left( x_0+\frac{\delta}{\gamma-1}\right) - \frac{\delta}
{\gamma-1}$.
Since $\{g^n(x_0):n\in\N\}$ is infinite, we must have
$x_0\ne -\delta/(\gamma-1)$.  By hypothesis, there are infinitely many
$n\in\N$ such that
\begin{equation}
\label{order of growth}
x_0+n\beta = \gamma^n\left( x_0+\frac{\delta}{\gamma-1}\right) -
 \frac{\delta}{\gamma-1}.
\end{equation}
This is not possible if $|\gamma|>1$, since then the absolute value of the right
side exceeds that of the left side for sufficiently large $n$.  Thus
$|\gamma|\le 1$, so the right side is bounded independently of $n$, whence
also $x_0+n\beta$ is bounded.  This implies $\beta=0$, so $f$ is the identity map,
contradicting the hypothesis that $\{f^n(x_0):n\in\N\}$ is infinite.
\end{proof}

\begin{remark}
We note that the argument used in Case 2 above does not generalize
to the setting of Theorem~\ref{complex},
since we used in a crucial way that we have only one variable $n$ in
\eqref{order of growth}, so the orders of growth of the two sides
of \eqref{order of growth} are different. 
In fact, the conclusion of Theorem~\ref{complex} is not generally
true if $f$ is a monic linear polynomial. For example, let
$f(X) = X + 1$ and let $g(X)$ be any nonconstant polynomial with
positive integer coefficients.  Then
for any positive integers $x_0$ and $y_0$ such that $g(y_0)>y_0$, the
intersection $\OO_f(x_0)\cap\OO_g(y_0)$ is infinite, since
$\OO_f(x_0)$ contains every sufficiently large integer.   On the other
hand, the argument from Case 1 generalizes at once to show that
the conclusion of Theorem~\ref{complex} holds when $f$ and $g$
are non-monic linear polynomials.
\end{remark}

The remainder of this section is devoted to proving
Theorem~\ref{main result} in case $K$ is a number field and
$\deg(f)\ne\deg(g)$.  We
recall some standard terminology: a \emph{global field} is either a
number field or a function field of transcendence degree $1$ over
another field.  Any global field $E$ comes equipped with a set $M_E$
of normalized absolute values $||\cdot||_v$ which satisfy a product
formula\footnote{A `normalized absolute value' is a power of an
  absolute value, but might not be an absolute value itself since it
  might fail the triangle inequality.}:

\[
\prod_{v\in M_E} ||x||_v = 1\quad\text{ for every $x\in E^*$}.
\]

If $E$ is a global field, the logarithmic Weil height of $x\in\overline{E}$
is defined as
% (see \cite[p.\ 52]{Lang_diophantine})
$$h(x) = \frac{1}{[E(x):E]}\cdot \sum_{v\in
  M_E}\sum_{\substack{w|v\\ w\in M_{E(x)}}}
\log\max\{||x||_w , 1\}.$$

We will use the following easy consequence of these definitions
 (cf.\ \cite[p.\ 77]{Lang_diophantine}).

\begin{lemma}
\label{easy fact}
Let $E$ be a global field, and let $\ell\in E[X]$ be a linear
polynomial. Then there exists $c_{\ell} > 0 $ such that $| h(\ell(x))
-  h(x)| \le c_{\ell}$\/ for all $x\in\overline{E}$.
\end{lemma}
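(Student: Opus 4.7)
The plan is to reduce the bound to three elementary properties of the logarithmic Weil height that hold for all $x,y\in\overline{E}$ and every nonzero $\alpha\in\overline{E}$: the subadditivity $h(x+y)\le h(x)+h(y)+\log 2$, the submultiplicativity $h(xy)\le h(x)+h(y)$, and the invariance $h(\alpha)=h(\alpha^{-1})$. All three are standard consequences of the definition of $h$ and the product formula on $E$, and they can be verified directly place-by-place.

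Write $\ell(X)=aX+b$ with $a\in E^*$ and $b\in E$, and set $c_{a,b}:=h(a)+h(b)+\log 2$. One application of the product inequality followed by one application of the sum inequality gives
\[
h(\ell(x))=h(ax+b)\le h(ax)+h(b)+\log 2\le h(a)+h(x)+h(b)+\log 2=h(x)+c_{a,b},
\]
which is the desired upper bound on $h(\ell(x))-h(x)$.

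For the matching lower bound, the key observation is that $\ell^{-1}(X)=a^{-1}X-a^{-1}b$ is again a linear polynomial over $E$. Applying the same estimate to $\ell^{-1}$ evaluated at $\ell(x)$, and invoking $h(a^{-1})=h(a)$, one obtains
\[
h(x)=h(\ell^{-1}(\ell(x)))\le h(a^{-1})+h(\ell(x))+h(a^{-1}b)+\log 2\le h(\ell(x))+2h(a)+h(b)+\log 2.
\]
Taking $c_\ell$ to be the larger of the two additive constants yields $|h(\ell(x))-h(x)|\le c_\ell$ for all $x\in\overline{E}$, as required.

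The argument is almost entirely bookkeeping, so there is no substantive obstacle. The only point requiring mild care is remembering to use $h(\alpha)=h(\alpha^{-1})$ when handling $\ell^{-1}$, so that the constant can be expressed purely in terms of $h(a)$ and $h(b)$; without this, one would be tempted to carry through an unnecessary dependence on $h(a^{-1})$ that in any case equals $h(a)$.
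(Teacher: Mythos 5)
Your proof is correct and takes the standard elementary approach; the paper gives no proof of its own, citing only Lang's \emph{Fundamentals of Diophantine Geometry}, and the argument there (functoriality of heights under degree-one morphisms of $\bP^1$) amounts to the same place-by-place bookkeeping you carry out. The three inequalities you invoke hold for any global field (the $\log 2$ is wasteful but harmless in the function-field case, where all places are nonarchimedean), and the resulting constant $2h(a)+h(b)+\log 2$ is strictly positive as required.
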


\begin{defi}
\label{definition canonical height}
Let $E$ be a global field, let $f\in E[X]$ with $\deg(f)>1$,
and let $z\in \overline{E}$. The
canonical height $\hh_f(z)$ of $z$ with respect to the morphism
$f:\bP^1\lra\bP^1$ is
$$\widehat{h}_f(z) = \lim_{k\rightarrow\infty} \frac{h(f^k(z))}{\deg(f)^k}.$$
\end{defi}

This definition is due to Call and Silverman, who proved the existence
of the above limit in \cite[Thm.\ $1.1$]{Call-Silverman} by using
boundedness of $|h(f(x)) - (\deg f) h(x)|$ and a
telescoping series argument due to Tate.  We will use the
following properties of the canonical height.
\begin{lemma}\label{can}
Let $E$ be a global field, let $f\in E[X]$ be a polynomial of degree
greater than $1$, and let $z\in {\overline{E}}$. Then
\begin{itemize} 
\item[(a)] for each $k\in\N$, we have $\hh_f(f^k(z)) = \deg(f)^k \cdot
  \hh_f(z)$;
\item[(b)] $|h(z) - \hh_f(z)|$ is uniformly bounded independently
of $z\in{\overline{E}}$;
\item[(c)] if $E$ is a number field, $z$ is preperiodic if and only if
 $\hh_f(z) = 0$.
\end{itemize}
\end{lemma}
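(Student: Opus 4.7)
The plan is to follow the standard Call--Silverman argument, treating the three parts in order and isolating a single analytic estimate as the main technical ingredient.

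For part (a), I would compute directly from Definition~\ref{definition canonical height}. Setting $d = \deg(f)$, reindexing the defining limit gives
\[
\hh_f(f^k(z)) = \lim_{n\to\infty} \frac{h(f^n(f^k(z)))}{d^n}
= \lim_{n\to\infty} \frac{h(f^{n+k}(z))}{d^n}
= d^k \lim_{m\to\infty} \frac{h(f^m(z))}{d^m}
= d^k \hh_f(z).
\]
This needs the existence of the limit defining $\hh_f$, which is what the rest of the proof establishes, but logically (a) is a trivial consequence.

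For part (b), the key ingredient (which I would cite or briefly justify from the definition of $h$) is that there is a constant $C > 0$ depending only on $f$ such that $|h(f(x)) - d\cdot h(x)| \le C$ for every $x \in \overline{E}$; for polynomial $f$ this is a standard application of the properties of normalized absolute values, and is precisely the Weil--height version of Lemma~\ref{easy fact} for the degree-$d$ map $f$. Given this, the Tate telescoping identity
\[
\frac{h(f^n(z))}{d^n} - h(z) = \sum_{k=0}^{n-1} \frac{h(f^{k+1}(z)) - d\cdot h(f^k(z))}{d^{k+1}}
\]
has summands bounded in absolute value by $C/d^{k+1}$, so the series converges absolutely and the limit defining $\hh_f(z)$ exists with $|h(z) - \hh_f(z)| \le C/(d-1)$, a bound independent of $z$. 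This simultaneously proves convergence in Definition~\ref{definition canonical height} and establishes~(b).

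For part (c), I would combine (a), (b) and Northcott's finiteness theorem. If $z$ is preperiodic then the orbit $\{f^k(z) : k \ge 0\}$ is finite, so $h(f^k(z))$ is bounded and $\hh_f(z) = \lim_{k\to\infty} h(f^k(z))/d^k = 0$. Conversely, if $\hh_f(z) = 0$ then by (a) $\hh_f(f^k(z)) = 0$ for every $k$, and by (b) $h(f^k(z)) \le C/(d-1)$ uniformly in $k$. Since all iterates lie in the fixed number field $E(z)$, Northcott's theorem gives only finitely many algebraic points of bounded height and bounded degree, so the orbit of $z$ is finite, i.e.\ $z$ is preperiodic.

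The main obstacle is really just the uniform estimate $|h(f(x)) - d\cdot h(x)| \le C$ underlying part (b); everything else is formal from the definitions together with Northcott. Since this estimate for polynomial maps is classical and moreover is essentially the content invoked by the Call--Silverman reference, I would simply state it, verify it for polynomial $f$ place by place using the max-of-coefficients bound, and then assemble (a), (b), (c) as above.
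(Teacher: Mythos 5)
Your proposal is correct and is essentially the same argument as the cited reference: the paper itself gives no proof, simply declaring (a) clear and delegating (b) and (c) to Call--Silverman (Thm.~1.1 and Cor.~1.1.1), and your Tate telescoping estimate plus Northcott argument is precisely the content of that citation. Everything checks: the reindexing for (a), the bound $|h(z)-\hh_f(z)|\le C/(d-1)$ from the telescoping series for (b), and for (c) the combination of (a), (b), the inclusion of the orbit in the fixed number field $E(z)$, and Northcott finiteness.
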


\begin{proof}
Part (a) is clear; for (b) and (c)
see \cite[Thm.\ 1.1 and Cor.\ 1.1.1]{Call-Silverman}.
\end{proof}
Part (c) of Lemma~\ref{can} is not true if $E$ is a function
field with constant field $E_0$, since $\hh_f(z)=0$ whenever $z\in E_0$
and $f\in E_0[X]$.  But these are essentially the only counterexamples
in the function field case (cf.\ Lemma~\ref{Benedetto}).

\begin{lemma}
\label{same degree}
Let $K$ be a number field, let $f,g \in K[X]$ and let $x_0,y_0\in K$.
If $\OO_{(f,g)}((x_0,y_0))$ has infinitely many points on the diagonal,
then $\deg(f)=\deg(g)>0$.
\end{lemma}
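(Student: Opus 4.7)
The plan is to compare the growth rates of the logarithmic Weil heights of $f^n(x_0)$ and $g^n(y_0)$, using the canonical height machinery recorded in Lemma~\ref{can} together with the linear-polynomial height bound of Lemma~\ref{easy fact}. Since the intersection of $\OO_{(f,g)}((x_0,y_0))$ with the diagonal is infinite, the sets $\{f^n(x_0):n\in\N\}$ and $\{g^n(y_0):n\in\N\}$ must each be infinite, so neither $x_0$ nor $y_0$ is preperiodic. In particular $f$ and $g$ are nonconstant, and $\deg(f),\deg(g)\ge 1$.

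Now suppose for contradiction that $\deg(f)\ne\deg(g)$; by symmetry I may take $\deg(f)>\deg(g)$. If $\deg(g)=1$, iterating Lemma~\ref{easy fact} yields $h(g^n(y_0))\le h(y_0)+nc_g$, i.e.\ growth at most linear in $n$. Since $\deg(f)\ge 2$ and $x_0$ is not preperiodic, parts (a)--(c) of Lemma~\ref{can} give
\[
h(f^n(x_0))=\deg(f)^n\,\hh_f(x_0)+O(1)\qquad\text{with }\hh_f(x_0)>0,
\]
so the left side grows exponentially and the identity $f^n(x_0)=g^n(y_0)$ can hold for only finitely many $n$, a contradiction. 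If instead $\deg(g)\ge 2$, apply the same canonical-height formula to $g$ to get $h(g^n(y_0))=\deg(g)^n\,\hh_g(y_0)+O(1)$ with $\hh_g(y_0)>0$. Because $\deg(f)>\deg(g)$, the ratio $h(f^n(x_0))/h(g^n(y_0))$ tends to $\infty$, again incompatible with $f^n(x_0)=g^n(y_0)$ for infinitely many $n$.

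The crux of the argument is the positivity of the canonical height at non-preperiodic points, i.e.\ Lemma~\ref{can}(c), which genuinely requires that $K$ be a number field (as the paper notes, this fails over function fields, where the constant-field points have vanishing canonical height). Beyond that, the argument is just a careful growth-rate comparison across the three possible regimes (both linear, mixed, both non-linear of distinct degree), and I do not anticipate any real obstacle.
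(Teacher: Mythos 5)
Your proof matches the paper's essentially line for line: both deduce non-preperiodicity (hence nonconstancy of $f,g$) from the infinite intersection, then assume $\deg(f)>\deg(g)$ and split into the cases $\deg(g)=1$ (linear growth via Lemma~\ref{easy fact}) versus $\deg(g)\ge 2$ (exponential growth at rate $\deg(g)^n$), using Lemma~\ref{can}(c) to get $\hh_f(x_0)>0$ and hence strictly faster exponential growth of $h(f^n(x_0))$. This is the same argument.
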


\begin{proof}
  The hypothesis implies $x_0$ (resp., $y_0$) is not preperiodic
for $f$ (resp., $g$).  Thus $f$ and $g$ are nonconstant.  Suppose
$\deg(f)>\deg(g)$.
  
Since $\hh_f(x_0) > 0$ (by Lemma~\ref{can}), there exists
  $\delta > 0$ such that every sufficiently large $k$ satisfies
$$ h(f^k(x_0)) > (\deg f)^k \delta.$$
If $\deg g=1$, by Lemma~\ref{easy fact} there exists $c_g>0$ such
that
$$ h(g^k(y_0)) \le k c_g + h(y_0)$$
for every $k$, and for sufficiently large $k$ we have
$(\deg f)^k \delta > k c_g + h(y_0)$.  
If $\deg g > 1$, there exists $\epsilon > 0$ such that every
$k$ satisfies
$$
h(g^k(y_0)) < (\deg g)^k \epsilon,$$
and since $\deg f>\deg g$ we have $(\deg f)^k \delta > (\deg g)^k\epsilon$
for $k$ sufficiently large.  Hence, in either case, for $k$ sufficiently
large we have $h(f^k(x_0)) > h(g^k(y_0))$ and thus $f^k(x_0)\ne g^k(y_0)$.
\end{proof}

\begin{remark}
\label{still linear}
This proof does not work for function fields, since it relies on
Lemma~\ref{can} (c).  However, one can use a different argument
to show that Lemma~\ref{same degree} is valid for any field $K$ (of
any characteristic).
In characteristic zero, this is a consequence of Theorem~\ref{main result}.
One can prove this for general $K$ using arguments similar to those in
this paper; the key intermediate result is that,
for any $f\in K[X]$ with $\deg(f)>1$, and any $z\in K$ non-preperiodic
for $f$, there is an absolute value $v$ of $K$ such that
$\lim_{n\to\infty}|f^n(z)|_v=+\infty$.
\end{remark}

%#######################################################################
%#######################################################################

\section{The function field case}
\label{over C}
In this section we prove Theorems~\ref{main result} and \ref{complex2}.
Our strategy is to `specialize' every transcendental generator of $K$
to an element of a number field, and then deduce these results from the
number field version proved previously (Theorem~\ref{number fields}).
We begin by proving that Theorem~\ref{main result} follows from the existence
of a suitable specialization homomorphism.

\begin{proof}[Proof of Theorem~\ref{main result}, assuming existence of a
 suitable specialization.]

From the results of the previous section, it suffices to prove
Theorem~\ref{main result} in case $K$ is a finitely generated extension
of $\Q$, the line $L$ is the diagonal, and $\deg(f)\ge 2$.  We will prove
Theorem~\ref{main result} by
induction on the transcendence degree of $K/\Q$.  The base case is
Theorem~\ref{number fields} and Lemma~\ref{same degree}.  For the inductive
step, let $E$ be a subfield of $K$ such that $\trdeg(K/E)=1$ and
$E/\Q$ is finitely generated.  Suppose in
addition that the diagonal is not periodic under the $(f,g)$ action (i.e.,
there is no $k\in\N$ for which $f^k=g^k$), and that the set
$\{(f^n(x_0),g^n(y_0)):n\in\N\}$ has infinite intersection with the diagonal.
Assume there is a subring $R$ of $K$, a finite extension $E'$
of $E$, and a homomorphism
$\alpha:R\to E'$, such that
\begin{enumerate}
\item $R$ contains $x_0$, $y_0$, and every coefficient of
$f$ and $g$, but the leading coefficients of $f$ and $g$ have nonzero image
under $\alpha$;
\item $f_\alpha^k\ne g_\alpha^k$ for each $k\in\N$;
\item $x_{0,\alpha}$ is not preperiodic for $f_{\alpha}$.
\end{enumerate}
(Here $f_\alpha$, $g_\alpha$, and $x_{0,\alpha}$ denote the images of
$f$, $g$, and $x_0$, respectively, under the homomorphism $\alpha$.)  

Properties $(1)$ and $(3)$ show that
 $\{(f_{\alpha}^n(x_{0,\alpha}),g_{\alpha}^n(y_{0,\alpha})):n\in\N\}$
 has infinite intersection with the diagonal. The inductive hypothesis
 implies $f_{\alpha}^k = g_{\alpha}^k$ for some $k\in\N$, which
 contradicts property $(2)$.
 Theorem~\ref{main result} follows.
\end{proof}

The proof of Theorem~\ref{complex2} is nearly identical to the proof of
Theorem~\ref{main result}, the only difference being that we replace
the set $\{(f^n(x_0),g^n(y_0)):n\in\N\}$ with
$\OO_f(x_0)\times\OO_g(y_0)$.

To explain why there exists an $\alpha$ as in the proof of
Theorem~\ref{main result}, we recall the usual setup for
specialization.  By replacing $E$ with a finite extension of $E$, we
may assume $E$ is algebraically closed in $K$.  Let $C$ be a smooth
projective curve over $E$ whose function field is $K$,
% (see Hartshorne [I.6], for example, for a construction of such curves),
and let $\pi:\bP^1_C\to C$ be the natural fibration.  Any
$z\in\bP^1_K$ gives rise to a section $Z:C\to\bP^1$ of $\pi$, and for
$\alpha\in C(\overline{E})$, we let $z_\alpha:=Z(\alpha)$,
and let $E(\alpha)$ be the residue field of $K$ at the valuation
corresponding to $\alpha$.  In the notation of the previous paragraph,
$R$ is the valuation ring for this valuation, $E'$ is $E(\alpha)$, and
the homomorphism $R\to E'$ is $z\mapsto z_\alpha$.  The polynomial
$f\in K[X]$ extends to a rational map (of $E$-varieties) from
$\bP^1_C$ to itself, whose generic fiber is $f$, and whose fiber above
any $\alpha\in C$ is $f_{\alpha}$.  Note that $f_{\alpha}$ is a
morphism of degree $\deg(f)$ from the fiber
$(\bP^1_C)_\alpha=\bP^1_{E(\alpha)}$ to itself whenever the
coefficients of $f$ have no poles or zeros at $\alpha$; hence it is a
morphism on $\bP^1_{E(\alpha)}$ of degree $\deg(f)$ at all but finitely many
$\alpha$ (we call these $\alpha$ places of \emph{good reduction} for $f$).

Intuitively, we will show that most choices of $\alpha$ satisfy conditions
(2) and (3) above (obviously all but finitely many $\alpha$ satisfy (1)).

We will first prove the following result about specializations of
polynomials. 

\begin{prop}\label{Ritt case}
For each $r>0$, there are at most finitely many $\alpha\in C(\overline{E})$
such that $[E(\alpha):E]\le r$ and $f_{\alpha}^k=g_{\alpha}^k$ for some $k\in\N$.
\end{prop}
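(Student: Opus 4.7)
The plan is to combine Ritt's classification of equal-degree polynomial pairs sharing a common iterate (Proposition~\ref{Rittprop}) with a Zariski-density argument on the curve $C$. Suppose, for contradiction, that there is an infinite sequence $\alpha_1, \alpha_2, \dots \in C(\overline{E})$ with $[E(\alpha_i):E] \le r$ and $f_{\alpha_i}^{k_i} = g_{\alpha_i}^{k_i}$ for some $k_i \in \N$.

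By Proposition~\ref{Rittprop}, each such pair $(f_{\alpha_i}, g_{\alpha_i})$ is simultaneously conjugate, via some linear $\ell_i \in \overline{E}[X]$, to a standard Ritt pair---either a monomial pair of the shape $(\epsilon_i X^d, \delta_i X^d)$ or a Chebyshev-type pair---where the discrete data (notably the roots of unity $\epsilon_i, \delta_i$) are determined by the multipliers of $f_{\alpha_i}$ and $g_{\alpha_i}$ at their critical fixed points. Since the coefficients of $f_{\alpha_i}$ and $g_{\alpha_i}$ lie in $E(\alpha_i)$, a field of degree at most $r$ over $E$, these multipliers lie in a field of bounded degree, so the orders of the relevant roots of unity are bounded uniformly in $r$ and $d := \deg f$. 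A pigeonhole step along the sequence then produces a single integer $k_0$, independent of $i$, such that $f_{\alpha_i}^{k_0} = g_{\alpha_i}^{k_0}$ for every $i$ in an infinite subsequence.

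The relation $f_\alpha^{k_0} = g_\alpha^{k_0}$ is a Zariski-closed condition on $\alpha \in C$, cut out by the vanishing of finitely many rational functions on $C$ (the differences between corresponding coefficients of $f_\alpha^{k_0}$ and $g_\alpha^{k_0}$, viewed as sections over an open subset of $C$). Since $C$ is an irreducible curve and this closed subscheme contains the infinite set of $\alpha_i$ in our subsequence, it must equal all of $C$. Passing to the generic fibre yields $f^{k_0} = g^{k_0}$ in $K[X]$, contradicting our standing hypothesis that $f^k \ne g^k$ for every $k \in \N$.

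The main obstacle is controlling the exponent $k_0$ uniformly in the specializations, which is precisely where the bound $[E(\alpha_i):E] \le r$ plays its essential role: it forces the discrete parameters in the Ritt normal form (roots of unity, sign choices in the Chebyshev case) to lie in a finite set depending only on $r$ and $d$. Some care is also needed to verify that the conjugating linear $\ell_i$ can be chosen intrinsically from the dynamics (e.g., centred at a critical fixed point of $f_{\alpha_i}$) so that the `Ritt type' condition is visibly algebraic on $C$; once this and the uniformity of $k_0$ are in place, the Zariski-density step is routine.
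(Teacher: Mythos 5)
Your proposal follows essentially the same route as the paper's. The key facts — that Ritt's classification produces a root of unity $\gamma$ whose order controls the minimal exponent $n$ with $f_\alpha^n=g_\alpha^n$, and that the bound $[E(\alpha):E]\le r$ together with finite generation of $E$ bounds the number of roots of unity in $E(\alpha)$, hence bounds $n$ uniformly — are exactly what the paper extracts into Corollary~\ref{Ritti} and then applies. Your Zariski-density step at the end is an equivalent formulation of the paper's direct observation that for a fixed $n$ with $f^n\ne g^n$, only finitely many $\alpha$ can satisfy $f_\alpha^n=g_\alpha^n$ (the leading coefficient of $f^n-g^n$, a nonzero element of $K$, vanishes at only finitely many places of $C$).

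One inaccuracy worth correcting: the normal form in Proposition~\ref{Rittprop} is not a monomial pair or a Chebyshev-type pair — you appear to be conflating the common-iterate theorem with Ritt's separate theorem classifying \emph{commuting} polynomials, which is where monomials and Chebyshev polynomials arise. Proposition~\ref{Rittprop} says $F$ and $G$ are, after a common translation, $\gamma H$ and $H$ for an essentially arbitrary polynomial $H$ of the appropriate shape; the relevant root of unity $\gamma$ is simply the ratio of the leading coefficients of $F$ and $G$, so it lies in the coefficient field $E(\alpha)$ automatically, without any conjugation. This also dissolves the obstacle you flag about choosing the conjugating linear intrinsically via critical fixed points — no conjugation is needed to recover $\gamma$, and the bound on its order depends on $E$ and $r$ (via $[E(\alpha)\cap\overline{\Q}:\Q]$), not on $d$.
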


Next, letting $h_C$ be the logarithmic Weil height on $C$ associated to a
fixed degree-one ample divisor, we will prove the following dynamical
analogue of Silverman's specialization result for abelian varieties
\cite[Thm.\ $C$]{Silverman_specialization}.

\begin{prop}
\label{Silverman case}
There exists $c>0$ such that, for $\alpha\in C(\overline{E})$ with
$h_C(\alpha)>c$, the point $x_{0,\alpha}$ is not preperiodic for
$f_{\alpha}$.
\end{prop}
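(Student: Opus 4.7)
The plan is to pass to canonical heights and apply the Call--Silverman specialization theorem.

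First, I would introduce the canonical height $\hh_f:\overline{K}\to\R_{\geq 0}$ associated to $f\in K[X]$ (of degree at least $2$), viewed as an endomorphism of $\bP^1_K$, in the sense of Definition~\ref{definition canonical height}. By the hypothesis under which Proposition~\ref{Silverman case} is invoked, $x_0$ is not preperiodic for $f$: otherwise the orbit $\{(f^n(x_0),g^n(y_0)):n\in\N\}$ would be finite, contradicting the standing assumption of Theorem~\ref{main result}.

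The crucial step is to show $\hh_f(x_0)>0$. Over a number field this is part~(c) of Lemma~\ref{can}, but over the function field $K/E$ one must exclude the isotrivial situation flagged in the remark following Lemma~\ref{can}: namely $f$ might, after a linear change of coordinates in $K[X]$, lie in $E[X]$ with $x_0\in E$. I would invoke Lemma~\ref{Benedetto} to assert that this is essentially the only way $\hh_f(x_0)=0$ can occur for a non-preperiodic $x_0$. In the isotrivial case, however, the whole dynamical system $(f,g,x_0,y_0)$ descends to $E$, which has strictly smaller transcendence degree over $\Q$, so the inductive hypothesis of Theorem~\ref{main result} applies directly and Proposition~\ref{Silverman case} is not needed at all. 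Hence we may assume $\hh_f(x_0)>0$.

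Next I would apply the Call--Silverman specialization theorem, which gives
\[
\lim_{h_C(\alpha)\to\infty}\frac{\hh_{f_\alpha}(x_{0,\alpha})}{h_C(\alpha)} = \hh_f(x_0).
\]
Picking $c>0$ large enough therefore forces $\hh_{f_\alpha}(x_{0,\alpha})>0$ for every $\alpha\in C(\overline{E})$ with $h_C(\alpha)>c$. Finally, the functional equation $\hh_{f_\alpha}(f_\alpha^k(x_{0,\alpha}))=\deg(f_\alpha)^k\cdot\hh_{f_\alpha}(x_{0,\alpha})$ shows that a point of strictly positive canonical height cannot be preperiodic, since a preperiodic point has finite forward orbit and hence uniformly bounded canonical heights; this works over any global field, so it applies regardless of whether $E(\alpha)$ is itself a number field or a function field. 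The main obstacle will be the positivity $\hh_f(x_0)>0$: one must either use Lemma~\ref{Benedetto} as a black box or argue by hand that the exceptional isotrivial case can be reabsorbed into the transcendence-degree induction underlying Theorem~\ref{main result}.
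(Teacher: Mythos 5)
Your main line of argument matches the paper exactly: apply Lemma~\ref{Benedetto} to get $\hh_f(x_0)>0$ when $f$ is not isotrivial, then feed this positivity into the Call--Silverman specialization limit (Lemma~\ref{CS prop}) to conclude $\hh_{f_\alpha}(x_{0,\alpha})>0$ for $h_C(\alpha)$ large, which rules out preperiodicity via the functional equation. So far, so good.

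The gap is in your treatment of the isotrivial case. You propose to dismiss it by claiming ``the whole dynamical system $(f,g,x_0,y_0)$ descends to $E$'' and then citing the transcendence-degree induction of Theorem~\ref{main result}. Two objections. First, the isotriviality of $f$ and the vanishing $\hh_f(x_0)=0$ tell you, after conjugating by a linear $\ell\in K'[X]$, that $\ell^{-1}\circ f\circ\ell\in\overline{E}[X]$ and (by Lemma~\ref{height 0 for isotrivial}) that $\ell^{-1}(x_0)\in\overline{E}$. They say nothing about $g$ or $y_0$: there is no reason for $g$ to be isotrivial, let alone conjugate into $\overline{E}[X]$ by the same $\ell$, merely because $f$ is. Without the descent of $g$ and $y_0$ (and indeed of the relevant line), you cannot invoke the inductive hypothesis. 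Second, even if you could rearrange the induction for Theorem~\ref{main result}, Proposition~\ref{Silverman case} is stated as a self-contained assertion about $f$ and $x_0$ alone, and it does hold in the isotrivial case (with $x_0$ non-preperiodic); your proposal leaves that case unproved rather than reabsorbed.

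The paper handles the isotrivial case by a short direct argument rather than by passing the buck to the induction: writing $F:=\ell^{-1}\circ f\circ\ell\in\overline{E}[X]$ and $w:=\ell^{-1}(x_0)\in\overline{E}$, one observes that $w$ is non-preperiodic for $F$ (since $x_0$ is non-preperiodic for $f$), and that both $F$ and $w$ are ``constant'' along $C$, so their specializations at any $\alpha$ are again $F$ and $w$. Conjugating back by $\ell_\alpha$ (which has good reduction away from finitely many $\alpha$) shows $x_{0,\alpha}$ is not preperiodic for $f_\alpha$ for all but finitely many $\alpha$, which suffices since those finitely many exceptional $\alpha$ have bounded height. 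You should supply this argument (or something equivalent) in the isotrivial branch instead of appealing to a descent of $g$ that you have not established.
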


We now show that these two results imply the existence of $\alpha$
satisfying (1)--(3), which in turn implies Theorems \ref{main result}
and \ref{complex2}.  Let
$\phi:C\to \bP^1_E$ be any nonconstant rational function, and let
$r=\deg(\phi)$.  By \cite[Prop.\ 4.1.7]{Lang_diophantine}, there
are positive constants $c_1$ and $c_2$ such that for all
$P \in \bP^1(\overline{E})$, the
preimage $\alpha=\phi^{-1}(P)$ satisfies $h_C(\alpha) \geq
c_1 h(P)+ c_2$.  Since there are infinitely many $P \in
\bP^1(E)$ such that $h(P) > (c - c_2)/c_1$, we thus obtain infinitely
many $\alpha \in C(\overline{E})$ such that $h_C(\alpha) > c$ and
$[E(\alpha):E]\le r$.  Hence, Propositions~\ref{Ritt case} and
\ref{Silverman case} imply there are infinitely many $\alpha$
satisfying (2) and (3), and all but finitely many of these satisfy (1)
as well.

%%%%%%%%%%
\begin{comment}
Now the existence of $\alpha$ satisfying (1)--(3) (and hence
Theorems~\ref{main result} and \ref{complex2})
follows, since there exists $r>0$ such that infinitely many
$\alpha\in C(\overline{E})$ satisfy $h_C(\alpha)>c$ and $[E(\alpha):E]\le r$.
To see this, we note that we can pull back rational points of $\bP^1_E$ of
sufficiently large height under any degree-$r$ morphism $C\to\bP^1_E$
(cf.\ \cite[Prop.\ 1.7, p.\ 80]{Lang_diophantine}). Since every
$\alpha\in C(E')$ with $[E':E]\le r$ automatically satisfies $[E(\alpha):E]\le r$,
we conclude that indeed, there exists $\alpha$ satisfying (1)--(3).\
\end{comment}
%%%%%%%%%%%%

\subsection{Polynomials with a common iterate}
\label{Ritt proof}

In this section we prove Proposition~\ref{Ritt case}.

Our proof relies on a classical result of Ritt \cite[p.~356]{Rittit}
describing the pairs of complex polynomials having a common
iterate, i.e., $F^n=G^m$ for some $n,m\in\N$.  We only need this
for $n=m$, in which case Ritt's result is as follows.

\begin{prop}
\label{Rittprop}
Let $F,G\in\C[X]$ with $d:=\deg(F)>1$.  For $n\in\N$, we have
$F^n=G^n$ if and only if 
$F(x)=-\beta+\gamma H(x+\beta)$ and
  $G(x)=-\beta+H(x+\beta)$ for some $\gamma\in \C^*$,
  $\beta\in\C$ and $H\in x^r\C[x^s]$
  (with $r,s\ge 0$) such that $\gamma^s = 1$
  and $\gamma^{(d^n-1)/(d-1)}=1$.  
\end{prop}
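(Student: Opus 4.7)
For the converse direction ($\Leftarrow$), I would conjugate $F$ and $G$ by the translation $X\mapsto X+\beta$, which replaces them by $\gamma H(X)$ and $H(X)$ and reduces the claim to $(\gamma H)^n=H^n$. Since $H\in X^r\C[X^s]$ and $\gamma^s=1$, the identity $H(\gamma X)=\gamma^r H(X)$ holds, and a straightforward induction yields $(\gamma H)^n=\gamma^{(r^n-1)/(r-1)}H^n$. Because all nonzero terms of $H$ lie in degrees $\{r,r+s,r+2s,\dots\}$, the degree $d$ of $H$ satisfies $d\equiv r\pmod{s}$, so $(d^n-1)/(d-1)\equiv(r^n-1)/(r-1)\pmod{s}$; combined with $\gamma^s=1$ and the hypothesis $\gamma^{(d^n-1)/(d-1)}=1$, this forces $\gamma^{(r^n-1)/(r-1)}=1$.

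For the main direction ($\Rightarrow$), I would assume $F^n=G^n$ and iterate Lemma~\ref{uniq}. Applying it to $F\circ F^{n-1}=G\circ G^{n-1}$ produces a linear $\ell$ with $G=F\circ\ell$, and we may assume $\ell\ne\mathrm{id}$ (else $F=G$). Substituting yields $F^n=(F\circ\ell)^n$, and a second application of Lemma~\ref{uniq} to $F\circ F^{n-1}=(F\circ\ell)\circ(F\circ\ell)^{n-1}$ produces a linear $\mu$ with $F=F\circ\ell\circ\mu^{-1}$ and $F^{n-1}=\mu\circ(F\circ\ell)^{n-1}$. The first equation expresses a linear invariance of $F$, so either $\mu=\ell$ or $F$ admits a nontrivial linear symmetry; in the principal subcase $\mu=\ell$, I would rewrite the second equation as $F^{n-1}\circ\ell^{-1}=(\ell\circ F)^{n-1}$ and apply Lemma~\ref{uniq} once more to $F\circ(F^{n-2}\circ\ell^{-1})=(\ell\circ F)\circ(\ell\circ F)^{n-2}$, obtaining a linear $\tau$ with the intertwining $\ell^{-1}\circ F=F\circ\tau^{-1}$. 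The degenerate subcase can be handled by applying Lemma~\ref{Halblem} directly to the linear symmetry of $F$.

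Applying Lemma~\ref{Halblem} to the intertwining with $a=\ell^{-1}$ and $b=\tau^{-1}$ will yield $F(X)=-\alpha+\hat F(X-\beta_L)$ with $\hat F\in X^r\C[X^s]$ and $\gamma^s=1$, together with explicit shapes for $\ell^{-1}$ and $\tau^{-1}$. A direct computation of $G=F\circ\ell$, using $\hat F(\eta Y)=\eta^r\hat F(Y)$ for $s$th roots of unity $\eta$, will give $G(X)=-\alpha+\gamma^{-r^2}\hat F(X-\beta_L)$ provided $\alpha+\beta_L=0$, i.e., provided $\beta_L$ is a common fixed point of $F$ and $G$. Setting $\beta:=-\beta_L$, $H:=\hat F$, and identifying the Proposition's $\gamma$ with $\gamma^{-r^2}$ (after possibly swapping the roles of $F$ and $G$, which is harmless since swapping is equivalent to replacing $\gamma$ by $1/\gamma$) will then produce the required form, and the exponent condition $\gamma^{(d^n-1)/(d-1)}=1$ will follow from the converse-direction calculation. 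The main obstacle I anticipate is the verification that $\alpha+\beta_L=0$: I expect to establish this by analyzing the constant term of $F^n-G^n$ after conjugating by $X\mapsto X+\beta_L$, showing that any nonzero $c:=\alpha+\beta_L$ forces the auxiliary scalar $\gamma^{-r^2}$ to equal $1$, collapsing $G$ to $F$.
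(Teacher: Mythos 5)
The paper does not prove Proposition~\ref{Rittprop}; it simply cites Ritt's 1920 paper for it and only later proves a corollary (Corollary~\ref{Ritti}). So there is no in-paper argument against which to check your proof, and you should be aware that a self-contained proof here is a genuine addition rather than a reproduction.

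Your converse direction is correct (with the convention $(r^n-1)/(r-1):=1+r+\dots+r^{n-1}$, which handles $r\in\{0,1\}$), and the reduction $(\gamma H)^n=\gamma^{\,1+r+\dots+r^{n-1}}H^n$ together with $d\equiv r\pmod s$ is exactly the right computation.

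The forward direction has the right skeleton (iterate Lemma~\ref{uniq} to produce the intertwining $\ell^{-1}\circ F=F\circ\tau^{-1}$, then invoke Lemma~\ref{Halblem}), but two points need repair. First, your ``degenerate subcase'' ($\mu\ne\ell$) cannot actually occur: the proof of Lemma~\ref{uniq} produces a \emph{unique} linear once $\deg(B)=\deg(D)$ is fixed, and since $C=F\circ\ell=G$ and $D=(F\circ\ell)^{n-1}=G^{n-1}$, your second application is literally the same instance of the lemma as the first, so $\mu=\ell$ automatically. You can simply delete the case split; as written, the appeal to ``applying Halblem to the linear symmetry of $F$'' is not a proof of anything.

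Second and more seriously, the crux $\alpha+\beta_L=0$ is left unresolved, and the repair you sketch does not work. Looking at the constant term of $F^n-G^n$ after conjugation gives $0=0$ (both sides are equal polynomials), and even if you could force $\gamma^{-r^2}=1$ this would not make $\ell$ the identity (take $\gamma=i$, $r=2$). The correct argument is different: observe that for any $\Phi$ of degree $D>1$, the ``center'' $-\theta_{D-1}/(D\theta_D)$ of $\Phi$ equals the center of every iterate $\Phi^k$ (check the coefficient of $X^{D^k-1}$, using that the centered polynomial has no $X^{D-1}$ term and all other terms have degree $\le D-2$). Hence $F^n=G^n$ forces $F$ and $G$ to share the center $\beta_L$. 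Since the center of $G=F\circ\ell$ is $\ell^{-1}(\beta_L)$, the linear $\ell$ must fix $\beta_L$. But Lemma~\ref{Halblem} gives $\ell(X)=-\alpha+\gamma^{-r}(X+\alpha)$, whose unique fixed point is $-\alpha$ whenever $\gamma^{r}\ne 1$; and if $\gamma^{r}=1$ then $\ell=\mathrm{id}$ and $F=G$. Thus in the nontrivial case $\beta_L=-\alpha$, which is exactly what you need. With that inserted, your computation of $G(X)=-\alpha+\gamma^{-r^2}\hat F(X-\beta_L)$ and the subsequent matching of parameters go through, and the exponent condition $\gamma^{(d^n-1)/(d-1)}=1$ follows from the converse-direction identity as you indicate.
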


\begin{cor}\label{Ritti}
Let $K$ be a field of characteristic zero, and let $N_K$ be the number
of roots of unity in $K$.  Let $F,G\in K[X]$ satisfy $\deg(F)=d>1$ and
$F^k=G^k$ for some $k\in\N$.  Then $F^n=G^n$ for some $n$ with $1\le n\le N_K$.
\end{cor}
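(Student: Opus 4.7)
The plan is a pigeonhole argument based on Proposition~\ref{Rittprop}. First, I apply that proposition to the hypothesis $F^k = G^k$ to obtain a decomposition $F(X) = -\beta + \gamma H(X+\beta)$ and $G(X) = -\beta + H(X+\beta)$ with $H \in X^r \Kbar[X^s]$, $\gamma^s = 1$, and $\gamma^{a_k} = 1$, where I abbreviate $a_n := 1 + d + \cdots + d^{n-1} = (d^n - 1)/(d-1)$. The triple $(\beta, H, \gamma)$ is actually uniquely determined by $F$ and $G$: the shift $\beta = g_{d-1}/(d g_d)$ is read off from the top two coefficients of $G$ as in Lemma~\ref{Halblem}, then $H(X) = G(X-\beta) + \beta$, and $\gamma$ is the ratio of the leading coefficients of $F$ and $G$. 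Hence $\beta, \gamma \in K$ and $H \in K[X]$; in particular $\gamma$ is a root of unity in $K$, so its order $m$ divides $N_K$. The other direction of Proposition~\ref{Rittprop} shows that for any $n \in \N$ one has $F^n = G^n$ if and only if $m \mid a_n$, so it suffices to produce $n \in \{1,\ldots,N_K\}$ with $m \mid a_n$.

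The key observation is that the existence of $k$ with $m \mid a_k$ forces $\gcd(d, m) = 1$: if a prime $p$ divided both, then reducing $a_k = 1 + d + \cdots + d^{k-1}$ modulo $p$ would yield $a_k \equiv 1 \pmod p$, contradicting $p \mid m \mid a_k$. With this coprimality in hand, I apply pigeonhole to the $m+1$ residues $a_0, a_1, \ldots, a_m$ in $\Z/m\Z$: two of them coincide, say $a_i \equiv a_j \pmod m$ with $0 \le i < j \le m$. The telescoping identity $a_j - a_i = d^i a_{j-i}$, combined with the invertibility of $d$ modulo $m$, gives $m \mid a_{j-i}$. Setting $n := j - i$ then yields $1 \le n \le m \le N_K$ and $F^n = G^n$.

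The only step requiring genuine insight is the coprimality $\gcd(d,m) = 1$; without it, pigeonhole only produces $m \mid d^i a_{j-i}$, which would be too weak to conclude. Everything else is a direct application of Proposition~\ref{Rittprop} together with the uniqueness of the Ritt normalization given by Lemma~\ref{Halblem}.
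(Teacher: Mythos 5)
Your proof is correct, but the key combinatorial step differs from the paper's. Both arguments begin the same way: apply Proposition~\ref{Rittprop}, observe that $\gamma$ is the ratio of leading coefficients of $F$ and $G$ (hence lies in $K$, so its order $m$ divides $N_K$), and deduce $\gcd(m,d)=1$ from $m\mid a_k$. The divergence is in producing a small $n$ with $m\mid a_n$. The paper constructs $n$ explicitly: for each prime $p\mid m$ with $p^t\,\|\, m$, it takes $q_p$ to be the order of $d$ in $(\Z/p^t)^*$ when $p\nmid(d-1)$ and $q_p=p^t$ otherwise, then sets $n=\prod_p q_p$; verifying $p^t\mid a_n$ in the case $p\mid(d-1)$ requires a lifting-the-exponent type computation (including the delicate subcase $p=2$), which the paper leaves to the reader. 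Your pigeonhole argument on the $m+1$ residues $a_0,\dots,a_m$ modulo $m$, combined with the telescoping identity $a_j-a_i=d^i a_{j-i}$ and the invertibility of $d$ mod $m$, reaches the same conclusion with strictly less number theory and no case analysis; it is cleaner, at the cost of being less explicit about which $n$ works. One small remark: the detour through the uniqueness of $(\beta,H,\gamma)$ via Lemma~\ref{Halblem} is not needed — the fact that $\gamma$ equals the ratio of leading coefficients is immediate from the shape of the decomposition, which is all you actually use.
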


\begin{proof}[Proof of Corollary~\ref{Ritti}.]
Let $K_0$ be the subfield of $K$ generated by the coefficients of $F$
and $G$.  Then $K_0$ is a finitely generated extension of $\Q$, so $K_0$
is isomorphic to a subfield of $\C$.  After identifying $K_0$ with its
image in $\C$, Proposition~\ref{Rittprop} implies that
$F=-\beta+\gamma H(x+\beta)$ and $G=-\beta+H(x+\beta)$ for some
$\gamma\in\C^*$, $\beta\in\C$, and $H\in x^r\C[x^s]$
(with $r,s\ge 0$) such that $\gamma^s=1$.  Moreover, for $n\in\N$ we have
$F^n=G^n$ if and only if $\gamma^{(d^n-1)/(d-1)}=1$.
Since $\gamma$ is the ratio of the leading coefficients of $F$ and $G$,
we see that $\gamma\in K_0^*$.  Since $\gamma^{(d^k-1)/(d-1)}=1$, the
multiplicative order $m$ of $\gamma$ is coprime to $d$.
Note that $m\le N_K$.

Let $p$ be a prime factor of $m$, and let $p^t$ be the maximal power of $p$
dividing $m$.  If $p\nmid (d-1)$ then let $q_p$ be the
order of $d$ in $(\Z/p^t)^*$; otherwise, put $q_p=p^t$.
Then $n:=\prod q_p$ satisfies $n\le m$ and $m\mid (d^n-1)/(d-1)$, whence
$n\le N_K$ and $F^n=G^n$.
\end{proof}

\begin{proof}[Proof of Proposition~\ref{Ritt case}.]
Pick a point $\ab$ on $C$ such that $[E(\ab):E]\le r$ and
$f_{\ab}^k = g_{\ab}^k$ for some $k\in \N$.  
Let $N_{\ab}$ be the number of roots of unity in $E(\ab)$.
By Corollary~\ref{Ritti}, the least $n\in\N$ with
$f_{\ab}^n = g_{\ab}^n$ satisfies $n\le N_{\ab}$.
Now, $N_{\ab}$ is bounded in terms of the degree
$[E(\ab) \cap {\overline \Q}: \Q]$, which is at most
$r\cdot [E\cap\overline\Q:\Q]$; since $E$ is finitely
generated, the latter number is finite, so there is a
finite bound on $n$ which depends only on $E$ and $r$
(and not on $\ab$).

For any fixed $n\in\N$, we have $f^n \ne g^n$, so $\deg(f_{\ab}^n-g_{\ab}^n)=
\deg(f^n-g^n)\ge 0$ for all but finitely many $\ab\in C$.
The result follows.
\end{proof}

\subsection{Specialization of non-preperiodic points}
\label{Silverman proof}

In this section we prove Proposition~\ref{Silverman case}.

First note that $E$ is a global field.
The key ingredient in our proof is the following result of Call
and Silverman \cite[Thm.\ 4.1]{Call-Silverman}, which relates $h_C$
to the canonical heights $\hh_f:\overline{K}\to\R_{\ge 0}$
 and $\hh_{f_{\alpha}}:\overline{E}\to \R_{\ge 0}$
of $f$ and $f_{\alpha}$
(cf.\ Definition~\ref{definition canonical height}).

\begin{lemma}\label{CS prop}
\begin{comment}
Let $\hh_f$ be the canonical height on $\overline{K}$ with respect to $f$,
and let $\hh_{f_{\alpha}}$ be the canonical height on
$\overline{E}$ with respect to
$f_{\alpha}\in E(\alpha)[X] \subset \overline{E}[X]$.
\end{comment}

For each $z\in K$ we have
\begin{equation}\label{CS}
\lim_{h_C(\alpha) \to \infty} \frac{\hh_{f_\alpha}(z_\alpha)}{h_C(\alpha)}
= \hh_{f}(z).
\end{equation}
\end{lemma}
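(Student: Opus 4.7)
The plan is to derive Lemma~\ref{CS prop} as a direct application of \cite[Thm.~4.1]{Call-Silverman}, with the bulk of the work consisting of matching the formalism of that theorem to our setup. First I would describe how the polynomial $f \in K[X]$ determines a family of degree-$d$ self-maps of $\bP^1$ parameterized by $C$: since $K$ is the function field $E(C)$, the coefficients of $f$ define rational functions on $C$, so $f$ extends naturally to a rational map $\Phi \colon \bP^1_C \dashrightarrow \bP^1_C$ over $C$ whose generic fiber is the morphism $f$ of degree $d = \deg(f) > 1$, and whose fiber $\Phi_\alpha = f_\alpha$ above every place of good reduction $\alpha \in C(\overline{E})$ is a morphism $\bP^1_{E(\alpha)} \to \bP^1_{E(\alpha)}$ of degree $d$. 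The element $z \in K$ corresponds to a section $Z \colon C \to \bP^1_C$ whose value at $\alpha$ is the specialization $z_\alpha$.

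Next I would invoke Call-Silverman with these data, taking the hyperplane class on $\bP^1$ as the divisor defining $\hh_f$ and $\hh_{f_\alpha}$, and the fixed ample degree-one divisor on $C$ already used in the definition of $h_C$. Their theorem then yields the limit formula \eqref{CS} directly. The one subtle point is that $\Phi$ may fail to be a morphism of degree $d$ at the finitely many places of bad reduction (for instance where the leading coefficient of $f$ has a zero or a pole), but the heights of these exceptional $\alpha$ are uniformly bounded, so they play no role in the limit as $h_C(\alpha) \to \infty$.

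The main obstacle, such as it is, is only the bookkeeping required to verify that our dynamical system fits the somewhat more general framework of Call-Silverman (who work with families of morphisms on arbitrary projective varieties equipped with a polarization); fortunately this specialization is routine, since we are in the simplest case of $\bP^1$ with its hyperplane class and the dynamical system is given globally by a single polynomial. I do not anticipate any genuinely new mathematical input beyond quoting their theorem and translating notation.
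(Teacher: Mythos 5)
Your proposal matches the paper exactly: the paper also presents Lemma~\ref{CS prop} as a direct quotation of \cite[Thm.~4.1]{Call-Silverman}, having already set up the fibration $\pi\colon \bP^1_C\to C$, the section $Z$, the specializations $z_\alpha$ and $f_\alpha$, and the good-reduction observation in the paragraphs preceding the lemma. Your extra remark about bad-reduction places having bounded height and thus being irrelevant to the limit is a sensible clarification but involves no new ideas beyond what the paper implicitly assumes.
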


We will also use a
result about canonical heights of non-preperiodic points for
polynomials that are not \emph{isotrivial}.
\begin{defi}
\label{isotrivial}
We say a polynomial $f \in K[X]$ is isotrivial if there
exists a finite extension $K'$ of $K$ and a linear $\ell
\in K'[X]$ such that $\ell^{-1}\circ f \circ \ell \in \overline{E}[X]$.
\end{defi}
Benedetto proved that a non-isotrivial polynomial can only have
canonical height equal to $0$ at its preperiodic points \cite[Thm.\ $B$]{Bene}:

\begin{lemma}
\label{Benedetto}
  Let $f\in K[X]$ with $\deg(f)\ge 2$, and let
$z\in \overline{K}$. If $f$ is not isotrivial, then
$ \hh_{f}(z) = 0$ if and only if $z$ is preperiodic for~$f$.
\end{lemma}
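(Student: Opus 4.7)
The plan is to prove the two implications separately, with the reverse direction being essentially formal and the forward direction requiring the full strength of the non-isotriviality hypothesis.

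The easy direction ($z$ preperiodic implies $\hh_f(z) = 0$) follows immediately from the defining limit $\hh_f(z) = \lim_n h(f^n(z))/d^n$: if $z$ is preperiodic then the orbit $\{f^n(z) : n \in \N\}$ is a finite subset of $\overline{K}$, so its Weil heights are bounded, and dividing by $d^n\to\infty$ forces the limit to vanish. (Equivalently, $\hh_f(f^k(z)) = d^k \hh_f(z)$ from Lemma~\ref{can}(a), and preperiodicity bounds the left-hand side uniformly in~$k$.)

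For the nontrivial direction, the plan is to decompose $\hh_f$ into a sum of nonnegative local canonical heights and then exploit non-isotriviality at a single well-chosen place. Concretely, after replacing $K$ by a finite extension containing $z$, for each place $v \in M_K$ one defines the local canonical height
\[
\hh_{f,v}(z) = \lim_{n \to \infty} \frac{1}{d^n}\log^+ |f^n(z)|_v,
\]
the existence of the limit coming from a Tate-type telescoping argument based on the bound $|\,|f(x)|_v - |x|_v^d\,|\le C_{f,v}$ for $|x|_v$ large. Each $\hh_{f,v}$ is nonnegative by construction, and a standard computation shows that the sum (weighted by local degrees, as in the definition of $h$) reproduces $\hh_f(z)$. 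Hence the hypothesis $\hh_f(z) = 0$ forces $\hh_{f,v}(z)=0$ for every place $v$, which says precisely that the forward orbit of $z$ is $v$-bounded at every place of $K(z)$, i.e.\ lies in the filled Julia set of $f$ at $v$.

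The main obstacle -- and the reason the isotriviality hypothesis is indispensable -- is that in the function field setting bounded-at-every-place orbits need not be finite: if $f \in \overline{E}[X]$ then every point of $\overline{E}$ is $v$-bounded at every place, and most of them are not preperiodic. My plan to get past this is to invoke the non-isotriviality hypothesis in the form of a ``bad place''. The plan is to show that if $f$ is non-isotrivial, then there exists a place $v_0$ of $K$ and no linear change of coordinate defined over any finite extension of $K_{v_0}$ which conjugates $f$ into a polynomial with $v_0$-integral coefficients and $v_0$-unit leading coefficient. At such a place the filled Julia set $\mathcal{K}_{v_0}(f)$ is a proper compact subset of $\A^1(\C_{v_0})$, strictly smaller than any disk on which $f$ would extend to an endomorphism of the integral model; moreover one can give an explicit positive lower bound, depending only on $f$ and $v_0$, for $\hh_{f,v_0}$ on the complement of a neighbourhood of the periodic cycles in $\mathcal{K}_{v_0}(f)$. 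Any non-preperiodic point whose orbit lies in $\mathcal{K}_{v_0}(f)$ would eventually enter this complement infinitely often, contradicting $\hh_{f,v_0}(z)=0$. This step -- producing the bad place $v_0$ from non-isotriviality, and extracting the quantitative lower bound on the local canonical height -- is the core of Benedetto's argument in \cite{Bene} and is where I would carry out the work in detail, using rigid/Berkovich-analytic analysis of the local dynamics of $f$ at $v_0$; the passage from ``orbit bounded at every place plus bad reduction at one place'' to preperiodicity is the step that must be handled with care rather than treated as formal.
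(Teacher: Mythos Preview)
The paper does not prove this lemma: it is quoted as \cite[Thm.~B]{Bene} and simply cited. So there is no paper proof to compare against; your proposal is an attempt to reconstruct Benedetto's argument.

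The easy direction and the reduction to local canonical heights are fine, and it is correct that non-isotriviality yields a place $v_0$ at which $f$ does not have potential good reduction. But your description of the endgame at $v_0$ contains an error: the local canonical height $\hh_{f,v_0}$ vanishes \emph{identically} on the filled Julia set $\mathcal{K}_{v_0}(f)$ (indeed $\mathcal{K}_{v_0}(f)$ is exactly the zero locus of $\hh_{f,v_0}$ on $\A^1(\C_{v_0})$), so there is no positive lower bound for $\hh_{f,v_0}$ anywhere inside $\mathcal{K}_{v_0}(f)$, neighbourhood of the periodic cycles or not. The actual content of Benedetto's argument at the bad place is a non-archimedean analysis of the disk-component structure of $\mathcal{K}_{v_0}(f)$, leading (roughly) to the conclusion that only finitely many points of bounded degree over $K$ can lie in \emph{every} local filled Julia set simultaneously; this is where the rigid-analytic dynamics enters, and it is not captured by a local-height lower bound of the kind you describe. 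You have correctly flagged this step as the substantive one, but the mechanism you propose for it would need to be replaced.
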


We need one more preliminary result.
\begin{lemma}
\label{height 0 for isotrivial}
Let $f\in K[X]$ be isotrivial with $\deg(f)\ge 2$, and let $\ell$ be as
in Definition~\ref{isotrivial}.  If $z\in\overline{K}$ satisfies
$\hh_f(z) = 0$, then $\ell^{-1}(z)\in\overline{E}$.
\end{lemma}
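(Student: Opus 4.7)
The plan is to push the statement through the conjugation by $\ell$ and then exploit the fact that, because $F := \ell^{-1}\circ f\circ\ell$ has coefficients in $\overline{E}$, the identity $h(F(y)) = d\cdot h(y)$ holds \emph{exactly} rather than up to a bounded error.

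First I would set $w := \ell^{-1}(z)$, so that $f^n(z) = \ell(F^n(w))$ for every $n \ge 0$. An application of Lemma~\ref{easy fact} gives $h(f^n(z)) = h(F^n(w)) + O(1)$ with an implicit constant independent of $n$; dividing by $d^n$ (where $d := \deg f = \deg F \ge 2$) and passing to the limit yields $\hh_f(z) = \hh_F(w)$. So the hypothesis becomes $\hh_F(w) = 0$, and what remains is to show $w \in \overline{E}$.

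The main step is the exact identity $h(F(y)) = d\cdot h(y)$ for every $y \in \overline{K}$. I would prove this by a local argument on any finite extension $K'/K$ containing $y$ and the coefficients of $F$: every place $v$ of $K'/E$ is trivial on its constant field (a finite extension of $E$ inside $\overline{E}$), so each coefficient $a_i$ of $F$ has $v(a_i) = 0$. Writing $F(y) = a_d y^d + \cdots + a_0$, the ultrametric inequality then forces $v(F(y)) = d\cdot v(y)$ wherever $v(y) < 0$, while $v(F(y)) \ge 0$ whenever $v(y) \ge 0$. Thus the pole divisor of $F(y)$ is precisely $d$ times that of $y$, and $h(F(y)) = d\cdot h(y)$. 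Iterating gives $h(F^n(w)) = d^n\, h(w)$, hence $\hh_F(w) = h(w)$, and the assumption forces $h(w) = 0$.

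Finally, on a function field the Weil height vanishes exactly on the constants: an element of $\overline{K}$ with no poles on its associated curve must lie in the algebraic closure of $E$ in $\overline{K}$, namely $\overline{E}$. Therefore $\ell^{-1}(z) = w \in \overline{E}$, as required. The only real obstacle is the exact identity $h\circ F = d\cdot h$; for a general $f \in K[X]$ one has this only up to $O(1)$, and it is precisely the isotriviality hypothesis, which puts the coefficients of $F$ in $\overline{E}$, that removes the bounded error.
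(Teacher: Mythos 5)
Your proof is correct and uses essentially the same mechanism as the paper's: conjugate by $\ell$ to transfer $\hh_f(z)=0$ to $\hh_F(w)=0$ via Lemma~\ref{easy fact}, then exploit at each nonarchimedean place $v$ the fact that the nonzero coefficients of $F$ are $v$-units so that $||F(y)||_v=||y||_v^d$ whenever $||y||_v>1$, and conclude $w$ has no poles and hence lies in $\overline{E}$. The only cosmetic difference is that you package the local computation into the clean global identity $h(F(y))=d\,h(y)$ (so that $\hh_F=h$), whereas the paper argues directly that a single place with $||w||_v>1$ would force $\hh_F(w)>0$; these are the same calculation.
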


\begin{proof}
Put $F:=\ell^{-1}\circ f \circ \ell \in K'[X]$, so
$F^n(\ell^{-1}(z)) = \ell^{-1} (f^n(z))$.
Since $\hh_f(z)=0$, Lemma~\ref{easy fact} implies that
$\hh_F(\ell^{-1}(z)) = 0$.  For any $v\in M_{K'(z)}$,
we know that every nonzero coefficient $\gamma$ of $F$ satisfies
$||\gamma||_v=1$ (since $\gamma\in \overline{E}$).  Since $v$ is
nonarchimedean, if $y\in K'(z)$ satisfies $||y||_v>1$ then
$\log ||F^n(y)||_v=\deg(F)^n \log ||y||_v$, so $\hh_F(y)>0$.
%\ge (\log ||y||_v)/[K'(y):K']>0$.
Thus $||\ell^{-1}(z)||_v\le 1$ for every $v\in M_{K'(z)}$, so
$\ell^{-1}(z)\in\overline{E}$.
\end{proof}

\begin{proof}[Proof of Proposition~\ref{Silverman case}.]
Put $z=x_0$.
   If $ \hh_{f}(z) > 0$ then, by Lemma~\ref{CS prop}, there exists $c>0$ such
   that every $\alpha \in C({\overline E})$ with $h_C(\alpha) > c$ satisfies
  $$
  \frac{\hh_{f_\ab}(z_\ab)}{h_C(\ab)} > 0.$$
Then $\hh_{f_\ab}(z_\ab) > 0$, so part (a) of Lemma~\ref{can} implies
$z_\ab$ is not preperiodic for $f_\ab$.

If $f$ is not isotrivial, Lemma~\ref{Benedetto} implies $\hh_f(z)>0$,
so the proof is complete.  It remains only to consider the case that $f$ is
isotrivial and $\hh_f(z) = 0$.

Pick a finite extension $K'$ of $K$ and a linear $\ell\in K'[X]$
such that $g := \ell^{-1} \circ f\circ \ell$ is in $\overline{E}[X]$, and put
$E':=\overline{E}\cap K'$.
Lemma~\ref{height 0 for isotrivial} implies
$w:=\ell^{-1}(z)$ is in $E'$.  Moreover, since
$\ell^{-1}\circ f^n(z) = g^n(w)$ and $z$ is not preperiodic for $f$,
we see that $w$ is not preperiodic for $g$. Because $g\in E'[X]$ and $w\in E'$,
then for all places $\alpha'$ of $K'$, the reductions of $g$ and $w$ at $\alpha'$
equal $g$, and respectively $w$ (because $E'$ embeds naturally into the residue
field at $\alpha'$). Hence, for all but finitely many $\alpha'$ (we only need to
exclude the places where $\ell$ does not have good reduction), if $\alpha$ is
the place of $K$ lying below $\alpha'$, then $z_{\ab}$ is not preperiodic for
$f_{\ab}$.

\begin{comment}
Let $C':=C\times_{\Spec_E}\Spec_{E'}$ be the base extension of $C$ to $E'$.
Since $g\in E'[X]$, the map induced by $g$ on $\bP^1_{C'}$ has constant
fibers, so $g_{\alpha} = g$ for
each $\alpha\in C$ (note that $C$ and $C'$ have the same geometric points).
Since $w\in E'$, the corresponding section $W: C' \lra \bP^1_{C'}$ has constant
fibers, so $w_{\alpha} = w$ for each $\alpha$, whence $w_{\ab}$ is not
preperiodic for $g_{\ab}$. Now the identity
$g_{\ab} = \ell_{\ab}^{-1} \circ f_{\ab} \circ \ell_{\ab}$ implies
that $z_{\ab}$ is non-preperiodic for $f_{\ab}$ (assuming that
$\ell_{\ab}$ is a morphism of degree one -- so, possibly excepting
finitely many $\alpha\in C$).
\end{comment}
\begin{comment}
(assuming that the coefficients of $\ell$ are integral at $\alpha$, and
the leading coefficient of $\ell$ is a unit at $\alpha$ - so, possibly
excepting finitely many $\alpha\in C$).
\end{comment}

\end{proof}

%#######################################################################
%#######################################################################

\section{Further conjectures}

We suspect that Theorem~\ref{complex2} remains true without the
hypothesis that $\deg(f) = \deg(g)$.  It might be possible to prove
this by methods similar to those in this paper; however, this seems to
require substantial effort, since the results of Bilu-Tichy and Ritt
which we used became much simpler in our case $\deg(f)=\deg(g)$.

It would be interesting to study Conjecture~\ref{dynamical M-L} for
other curves in the plane.  In particular, it may be possible to treat
curves of the form $F(X)=G(Y)$ (with $F,G$ polynomials) by methods
similar to ours.

%#######################################################################
%#######################################################################
%#######################################################################

%\bibliographystyle{plain}

\end{document}